\documentclass{amsart}
\usepackage{tikz}
\usepackage{xcolor}
\usepackage{amssymb,latexsym,amsmath,extarrows}
\usepackage{graphicx,mathrsfs,comment}
\usepackage{hyperref,url}
\usepackage{mathtools}
\usepackage{mathabx}

\usepackage{amstext}
\usepackage{bbm}

\numberwithin{equation}{section}

\renewcommand{\theequation}{\arabic{section}.\arabic{equation}}

\newtheorem{theorem}{Theorem}[section]
\newtheorem{lemma}[theorem]{Lemma}

\newtheorem{proposition}[theorem]{Proposition}

\newtheorem{corollary}[theorem]{Corollary}

\newcommand{\bbk}{{\mathbb{K}}}

\begin{document}
\hypersetup{hidelinks}

\title[Positive measure for restricted projections]{Positive measure for restricted projections along smooth non-degenerate curves}

\author{Seonghun Jeon} \address{ Seonghun Jeon \\ Department of Mathematical Sciences, Seoul National University, Republic of Korea}\email{baba4409@snu.ac.kr}

\author{Changkeun Oh}\address{ Changkeun Oh\\ Department of Mathematical Sciences and RIM, Seoul National University, Republic of Korea} \email{changkeun@snu.ac.kr}

\begin{abstract}
Let $1\leq m<n$, and let $\gamma:[0,1]\to\mathbb R^n$ be a smooth
non-degenerate curve.  For $\theta\in[0,1]$, set
\[
    V_\theta
    :=
    \operatorname{span}
    \{\gamma'(\theta),\ldots,\gamma^{(m)}(\theta)\},
\]
and let $\pi_\theta$ denote the orthogonal projection onto $V_\theta$. 
We prove that if $E\subset\mathbb R^n$ is a Borel set with
$\dim_{\mathrm H}E>m$, then
\[
    \mathcal H^m(\pi_\theta(E))>0
\]
for almost every $\theta\in[0,1]$.  
\end{abstract}

\maketitle

\section{Introduction}

Fix integers $1\leq m<n$.  Let $\gamma:[0,1]\to\mathbb R^n$ be a smooth curve satisfying
\begin{equation}\label{eq:nondegenerate}
    \det\big[\gamma'(\theta),\ldots,\gamma^{(n)}(\theta)\big]\neq0,
    \qquad \theta\in[0,1].
\end{equation}
Such a curve will be called non-degenerate.  A model example is the moment curve
\begin{equation}\label{eq:model-moment-curve}
    \gamma_\circ(\theta)=\left(\theta,\frac{\theta^2}{2!},\ldots,\frac{\theta^n}{n!}\right).
\end{equation}
For $\theta\in[0,1]$, let
\begin{equation}\label{eq:Vtheta}
    V_\theta:=\operatorname{span}\{\gamma'(\theta),\ldots,\gamma^{(m)}(\theta)\},
\end{equation}
and let $\pi_\theta:\mathbb R^n\to V_\theta$ denote the orthogonal projection. The parameter $\theta$ indexes the family of subspaces
$\{V_\theta\}_{\theta\in[0,1]}$ and the associated family of projections
$\{\pi_\theta\}_{\theta\in[0,1]}$.

\begin{theorem}\label{thm:main}
Let $E\subset B^n(0,1)$ be Borel measurable and suppose that
\[
    \dim_{\mathrm H}E>m.
\]
Then
\[
    \mathcal H^m(\pi_\theta(E))>0
\]
for almost every $\theta\in[0,1]$.
\end{theorem}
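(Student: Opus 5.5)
The plan is the standard route: Frostman measure, then an $L^2$ bound for the pushforwards, via a Fourier/energy reduction and a decoupling or square-function estimate adapted to the cone generated by the $V_\theta$. Since $\dim_{\mathrm H}E>m$, fix $m<s<\dim_{\mathrm H}E$ and a Frostman probability measure $\mu$ on $E$ with $\mu(B(x,r))\lesssim r^{s}$. It suffices to show
\[
\int_0^1 \|\pi_{\theta\#}\mu\|_{L^2(V_\theta)}^2\,d\theta<\infty .
\]
Then $\pi_{\theta\#}\mu$ is absolutely continuous with respect to $\mathcal H^m|_{V_\theta}$ for a.e.\ $\theta$, and being a probability measure supported on $\pi_\theta(E)$ it forces $\mathcal H^m(\pi_\theta(E))>0$.

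By Plancherel on $V_\theta$ we have $\|\pi_{\theta\#}\mu\|_2^2=\int_{V_\theta}|\hat\mu(\xi)|^2\,d\xi$. We decompose dyadically, $|\xi|\sim R=2^k$. The goal becomes
\[
\int_0^1\int_{V_\theta\cap\{|\xi|\sim R\}}|\hat\mu(\xi)|^2\,d\xi\,d\theta\lesssim R^{-\epsilon}
\]
for some $\epsilon>0$ depending on $s-m$, which sums over $k$.

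The left side is $\int|\hat\mu|^2\,d\nu_R$, where $\nu_R$ is the measure on the annulus obtained by integrating the $m$-dimensional surface measures of $V_\theta\cap\{|\xi|\sim R\}$ over $\theta$. It lives on an $(m+1)$-dimensional cone $\Gamma=\bigcup_\theta V_\theta$, which is a nondegenerate "tangent developable" type surface.

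The standard approach to such a bound is to dualize and localize. We cover $\Gamma\cap\{|\xi|\sim R\}$ by planks: for $\theta$ in an interval of length $R^{-1/n}$ (or a multiscale family), the slab $V_\theta+O(\cdot)$ neighbourhoods. Then $\int|\hat\mu|^2\,d\nu_R$ is comparable to $R^{-(n-m-1)}\cdot\int|\hat\mu|^2\psi_R$ for a smooth bump $\psi_R$ adapted to the $1$-neighbourhood of $\Gamma$ (after averaging over $\theta$ the missing transverse dimensions cost a factor of $R$). Writing $\hat\mu\psi_R=\widehat{\mu*\check\psi_R}$ and expanding in wave packets over the planks, the estimate reduces to a weighted estimate of the form
\[
\int |f|^2\,d\mu\lesssim R^{m-s+\cdots}\|f\|_2^2
\]
for functions $f$ Fourier supported in $N_1(\Gamma)\cap\{|\xi|\sim R\}$. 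This is a "Mizohata–Takeuchi / fractal $L^2$ restriction" type inequality.

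The exponent needed is exactly that $\int|\hat\mu|^2$ over the $1$-neighbourhood of $\Gamma$ gains a factor of $R^{m+1-s-\epsilon}$ beyond the trivial bound, matching the threshold $s>m$.

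I would prove this by induction on scales combined with the decoupling inequality for non-degenerate curves (Bourgain–Demeter–Guth) in its generalized form for the cones $V_\theta$. The argument parallels the approaches of Gan–Guo–Guth–Harris–Maldague–Wang for $m=1$ and of Gan–Guth–Maldague for the $(n-1)$-planes. At scale $R$ we partition $[0,1]$ into intervals $\tau$ of length $R^{-1/n}$ and decompose $f=\sum_\tau f_\tau$. Locally each $f_\tau$ is essentially constant on dual planks $T_\tau$, which are tubes of dimensions compatible with $V_\theta$ and its derivatives. The Frostman condition bounds $\mu(T_\tau)$, and rescaling (the moment-curve affine symmetry $\theta\mapsto\theta_0+\lambda\theta$ after reducing to $\gamma_\circ$ by a local approximation) maps each $\tau$-piece back to the original problem at a smaller scale. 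Iterating gives the $R^{\epsilon}$-loss bound.

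The main obstacle is this last step: controlling the interaction across different $\tau$ with only $\mu(B(x,r))\lesssim r^{s}$ and $s$ barely above $m$. Decoupling alone loses too much in the low-dimensional regime. I would first try a high–low frequency decomposition of the square function $\sum_\tau|f_\tau|^2$, as in Guth–Wang–Zhang's cone square function argument, run inductively on $m$. Lower-order derivatives reduce to the case $V_\theta$ of dimension $m-1$ for the curve projected along $\gamma^{(m)}$, which gives the base case $m=1$ from known results. This would settle the weighted estimate, and hence Theorem~\ref{thm:main}.
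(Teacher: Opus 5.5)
\textbf{There is a genuine gap, and it sits in your first reduction.} You aim to prove $\int_0^1\|\pi_{\theta\#}\mu\|_{L^2(V_\theta)}^2\,d\theta\lesssim1$ for every measure with $\mu(B(x,r))\lesssim r^s$. This is equivalent to your dyadic bounds $\lesssim R^{-\epsilon}$, and also to the weighted inequality you plan to prove by induction on scales. But the estimate is \emph{false} when $s$ is only slightly larger than $m$, so no decoupling or square-function argument can prove it.

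Here is a counterexample with $(n,m)=(3,1)$. Fix $\theta_0\in(0,1)$ and $0<\delta\ll1$. Let $\mu$ be $\delta^{\epsilon}$ times the normalized Lebesgue measure on the $\delta$-neighbourhood of a unit segment in the binormal direction $e_3(\theta_0)$. A direct check gives $\mu(B(x,r))\lesssim r^{1+\epsilon}$ for all $x,r$. Non-degeneracy gives $\kappa\tau\neq0$, so $\langle e_3(\theta_0),e_1(\theta)\rangle=\tfrac12\kappa\tau(\theta-\theta_0)^2+O(|\theta-\theta_0|^3)$. Hence $\pi_{\theta\#}\mu$ has density $\approx\delta^{\epsilon}/\max\{|\theta-\theta_0|^2,\delta\}$ on an interval of length $\approx\max\{|\theta-\theta_0|^2,\delta\}$, and therefore
\[
\int_0^1\|\pi_{\theta\#}\mu\|_{L^2(V_\theta)}^2\,d\theta\approx\delta^{2\epsilon-\frac12}.
\]
This is unbounded as $\delta\to0$ whenever $\epsilon<1/4$. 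Summing your dyadic bounds over $R$ would give a bound uniform in $\delta$, so those bounds cannot all hold. Placing such pieces at disjoint locations, with $\delta$ decreasing super-exponentially, even gives a single measure with $c_{1+\epsilon}(\mu)\lesssim1$ and infinite $L^2$ average.

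The case $(3,2)$ is analogous: use a $\delta$-slab orthogonal to $e_1(\theta_0)$. There the Jacobian of $\pi_\theta$ restricted to the slab is $|\langle e_1(\theta_0),e_3(\theta)\rangle|\approx|\theta-\theta_0|^2$.

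On the Fourier side, $|\widehat\mu|\gtrsim\delta^{\epsilon}$ on the whole slab $\{|\langle\xi,e_3(\theta_0)\rangle|\lesssim1,\ |\xi|\lesssim\delta^{-1}\}$. That slab contains the part of the cone where $|\theta-\theta_0|\lesssim|\xi|^{-1/2}$. A measure concentrated on a single plank in this Knapp-type configuration is exactly what an $L^2$ bound using only the Frostman condition cannot control.

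\textbf{The missing idea is the good--bad wave packet decomposition.} This is why the paper proves only an $L^q$ bound with $q$ close to $1$ (Theorem~\ref{thm:measure}), not an $L^2$ bound. The argument runs as follows.
\begin{itemize}
\item At scale $2^j$, the paper splits the frequencies in $V_\theta$ into the multiscale, $\theta$-dependent output boxes of Proposition~\ref{prop:Vtheta-decomposition}. A single partition into intervals of length $R^{-1/n}$ does not suffice.
\item It expands into packets $T$ on the dual planks, and calls $T$ bad if $\mu(100T)\geq\beta_{j,\bbk}$. Your tube is precisely a bad packet.
\item Only the good part is estimated in $L^2$, using the output decoupling of Proposition~\ref{prop:output-decoupling}. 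The upper bound $\mu(100T)<\beta_{j,\bbk}$ is what makes the exponents close.
\item The bad part is estimated only in $L^1$. There are $\lesssim\mu(\mathbb R^n)/\beta_{j,\bbk}$ bad packets per box, and their projections are covered by $\lesssim2^{\alpha_0 j}$ balls of radius $2^{-j}$. The incidence estimate (Proposition~\ref{prop:incidence}, from the Gan--Guo--Wang dimension theorem) then gives decay $2^{-\epsilon_0 j}$.
\item Interpolating with a trivial $L^\infty$ bound gives the $L^q$ estimate, which still implies absolute continuity.
\end{itemize}

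Your induction on $m$ by ``projecting the curve along $\gamma^{(m)}$'' is also unsubstantiated. The decisive problem, however, is that the inequality you set out to prove is false.
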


Theorem~\ref{thm:main} easily follows from the following measure-theoretic statement.

\begin{theorem}\label{thm:measure}
Let $m<\alpha<n$. Then there exists $1<q<2$ such that, for every
nonzero finite Borel measure $\mu$ supported on $B^n(0,1)$ satisfying
\begin{equation}\label{eq:frostman}
    c_\alpha(\mu)
    :=\sup_{x\in\mathbb R^n,\,r>0}
    \frac{\mu(B(x,r))}{r^\alpha}
    \leq1,
\end{equation}
we have
\begin{equation}\label{eq:averaged-Lq}
    \int_0^1
    \|\pi_{\theta\#}\mu\|_{L^q(V_\theta)}^q\,d\theta
    \lesssim_\alpha
    \mu(\mathbb R^n).
\end{equation}
In particular, $\pi_{\theta\#}\mu$ is absolutely continuous with
respect to $\mathcal H^m|_{V_\theta}$ for a.e.
$\theta\in[0,1]$.
\end{theorem}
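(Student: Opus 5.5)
The plan is to prove \eqref{eq:averaged-Lq} by combining a Littlewood--Paley decomposition of $\pi_{\theta\#}\mu$ with a square-function (decoupling-type) estimate for the cinematic family $\{V_\theta\}$. After that, the a.e.\ absolute continuity is immediate: for a.e.\ $\theta$ we have $\pi_{\theta\#}\mu\in L^q(V_\theta)$, and $L^q$ densities give absolutely continuous measures.

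First we reduce to frequency pieces. Fix a smooth Littlewood--Paley partition $\sum_{k\ge0}\psi_k(\xi)=1$ on $\mathbb R^n$, with $\psi_k$ supported on $|\xi|\sim2^k$, and set $\mu_k=\mu*\check\psi_k$. Since $\widehat{\pi_{\theta\#}\mu}(\eta)=\widehat\mu(\eta)$ for $\eta\in V_\theta$, the measure $\pi_{\theta\#}\mu$ is the restriction of $\widehat\mu$ to $V_\theta$, read on the Fourier side. Hence $\pi_{\theta\#}\mu=\sum_k \pi_{\theta\#}\mu_k$, and by the triangle inequality in $L^q$ it suffices to show
\[
\Big(\int_0^1\|\pi_{\theta\#}\mu_k\|_{L^q(V_\theta)}^q\,d\theta\Big)^{1/q}\lesssim 2^{-\epsilon k}\mu(\mathbb R^n)^{1/q}
\]
for some $\epsilon=\epsilon(\alpha,q)>0$. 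We interpolate between two bounds.

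\emph{The $L^1$ bound.} Trivially $\|\pi_{\theta\#}\mu_k\|_{L^1}\lesssim\mu(\mathbb R^n)$.

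\emph{The $L^2$ bound.} By Plancherel on $V_\theta$ and Fubini,
\[
\int_0^1\|\pi_{\theta\#}\mu_k\|_2^2\,d\theta\approx\iint |\widehat{\mu}(\eta)|^2\psi_k(\eta)^2\,d\mathcal H^m|_{V_\theta}(\eta)\,d\theta .
\]
This equals $\int|\widehat\mu|^2\,d\nu_k$, where $\nu_k$ is the measure on the annulus $|\xi|\sim 2^k$ swept out by the planes $V_\theta$. The main obstacle lies here. The union $\bigcup_\theta V_\theta\cap\{|\xi|\sim2^k\}$ is a set of dimension $m+1$, and its $1$-neighbourhood has volume about $2^{k(m+1)}$. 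Using Frostman only through energy bounds ($\int|\widehat\mu|^2|\xi|^{\alpha-n}<\infty$) would therefore give a gain of $2^{k(m-\alpha)}$ only when $\alpha>m+1$.

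For $m<\alpha\le m+1$ we need more. We will \emph{not} pass to $L^2$ directly. Instead we work in $L^q$ with $q$ close to $1$ and use a wave-packet/Kakeya-type decomposition. Decompose $\theta$ into intervals of length $2^{-k/n}$, or better into the natural anisotropic scales along $\gamma$. On each such interval, the frequency pieces $\{\eta\in V_\theta:|\eta|\sim 2^k\}$ lie in anisotropic boxes (generalized plates of the moment curve). We then invoke the restricted-projection decoupling/square function of Gan--Guo--Guth--Harris--Maldague--Wang type for non-degenerate curves: the $m$-plane analogue of the $L^p$ square function estimate for cones generated by $\gamma$, which I take as known. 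This converts $\|\pi_{\theta\#}\mu_k\|_q$, averaged in $\theta$, into a sum over plates of local masses $\mu(T)$. Here $T$ is a $2^{-k}$-thickened plate, and Frostman gives $\mu(T)\lesssim 2^{-k\alpha}\cdot(\#\text{balls})$.

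Counting then produces a bound of the form
\[
2^{k(1-1/q)(m-\alpha)+k\,\delta(q)}\,\mu(\mathbb R^n)^{1/q},
\]
where $\delta(q)\to0$ as $q\to1$. Choosing $q$ close enough to $1$ (depending on $\alpha-m$) gives the desired $\epsilon>0$. Summing the geometric series in $k$ then finishes the proof of \eqref{eq:averaged-Lq}.

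The step I expect to be hardest is exactly this $\theta$-averaged high-frequency estimate. It must beat the trivial $2^{k(m+1-\alpha)}$ loss using the curvature of $\gamma$. My first attempt would be induction on scales in $m$ and $n$, via the standard Pramanik--Yang--Zahl / Gan--Guth--Maldague framework: treat $m=1$ as the base case and reduce to lower-dimensional non-degenerate curves by rescaling.
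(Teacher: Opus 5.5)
There is a genuine gap, and it sits exactly at the step you flag as hardest. The reduction to Littlewood--Paley pieces, the trivial $L^1$ bound, and your diagnosis that a direct $L^2$ estimate loses $2^{k(m+1-\alpha)}$ are all fine. What fails is the mechanism you propose for $m<\alpha\le m+1$.

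First, the curvature of $\gamma$ enters only through decoupling or square-function inequalities, and these give gains at exponents $p\geq2$; the paper uses $\ell^2$ decoupling at $p=n(n+1)$. At $q$ near $1$, nothing of this kind turns $\|\pi_{\theta\#}\mu_k\|_q$ into a sum of plate masses with a gain. That norm is governed by how heavily the projected plates overlap, which is an incidence question.

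Second, Frostman cannot control the plates. Each plank $T$ dual to a frequency box at scale $2^j$ contains the $2^{-j}$-neighbourhood of a unit piece of a fibre $\pi_\theta^{-1}(u)$. Frostman only gives $\mu(T)\lesssim\min\{1,2^{-(\alpha-(n-m))j}\}$, which can be attained. This exceeds the level $|\tau|^{-1}\approx2^{-mj}$ of the thinnest planks by a positive power of $2^j$. So your bound $2^{k(1-1/q)(m-\alpha)+k\delta(q)}$ has no derivation. Moreover, $(1-1/q)(\alpha-m)$ and $\delta(q)$ both tend to $0$ as $q\to1$, so taking $q$ close to $1$ closes nothing unless $\delta(q)=o(q-1)$.

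Third, the $m$-plane square-function estimate for cones over $\gamma$ that you take as known is not available for general $(n,m)$. The paper avoids it, and even avoids refined decoupling.

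The missing idea is a good--bad splitting of the wave packets, with the two parts handled by different tools. The paper first decomposes each annulus of $V_\theta$ into output boxes $\tau$, using a $\theta$-dependent multiscale decomposition following Gan--Guo--Wang. A packet $T$ is called bad if $\mu(100T)\geq\beta_{j,\bbk}=2^{-\alpha_0j+A_{\bbk}+C_0\epsilon j}$, where $\alpha_0=m-\epsilon$.

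For the bad part, there are $\lesssim2^{\alpha_0j-A_{\bbk}}$ bad planks per box, and each projects into $\lesssim2^{A_{\bbk}+C\delta j}$ balls of radius $2^{-j}$. Hence all bad projections lie in $\lesssim2^{\alpha_0j}$ such balls, with $\alpha_0<m$. The Gan--Guo--Wang incidence estimate (the quantitative form of $\dim_{\mathrm H}\pi_\theta(E)=\min\{m,\dim_{\mathrm H}E\}$) then gives a $\theta$-averaged $L^1$ bound with decay $2^{-\epsilon_0j}$. Interpolating with the trivial $L^\infty$ bound $2^{Cj}$ gives $L^q$ decay for $q-1$ small. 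So $q$ near $1$ comes from a \emph{nontrivial} $L^1$ estimate and a \emph{trivial} $L^\infty$ estimate, the reverse of your scheme.

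Only the good part is estimated in $L^2$. Plancherel reduces it to $\ell_{\bbk}\int_Y|\sum_Tf_T|\,d\mu$. This quantity is bounded once using the lightness condition, and once using H\"older, the iterated $\ell^2$ decoupling at $p=n(n+1)$, and Frostman. Multiplying the two bounds cancels the multiplicity $M$ and the exponents $K_{\bbk}$, $A_{\bbk}$, leaving decay when $\epsilon\ll\alpha-m$. Removing the heavy planks first is precisely what defeats the $2^{k(m+1-\alpha)}$ obstruction you identified.
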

\begin{proof}[Proof that Theorem~\ref{thm:measure} implies Theorem~\ref{thm:main}]
Let
\[
m<\alpha<\dim_{\mathrm H}E.
\]
By Frostman's lemma, there exists a nonzero finite Borel measure \(\mu\) supported on \(E\) such that
\[
\mu(B(x,r))\lesssim r^\alpha,
\qquad x\in\mathbb R^n,\quad r>0.
\]
After multiplying $\mu$ by a positive constant, we may assume that
$c_\alpha(\mu)\leq1$. Hence Theorem~\ref{thm:measure} applies. Consequently,
\[
\pi_{\theta\#}\mu
   \ll \mathcal H^m|_{V_\theta}
\]
for almost every \(\theta\in[0,1]\).

Fix such a \(\theta\). Since \(\mu\) is supported on \(E\),
\[
(\pi_{\theta\#}\mu)(\pi_\theta(E))
 =\mu\bigl(\pi_\theta^{-1}(\pi_\theta(E))\bigr)
 =\mu(\mathbb R^n)>0.
\]
On the other hand, if \(\mathcal H^m(\pi_\theta(E))=0\), then the absolute continuity of \(\pi_{\theta\#}\mu\) would give
\[
(\pi_{\theta\#}\mu)(\pi_\theta(E))=0,
\]
a contradiction. Therefore
\[
\mathcal H^m(\pi_\theta(E))>0
\]
for almost every \(\theta\in[0,1]\), as required.
\end{proof}

Combining Theorem~\ref{thm:measure} with
\cite[Theorem~1.2]{Mattila2024}, we obtain the following.

\begin{corollary}\label{cor:sections}
Let $m<s<n$, and let $E\subset B^n(0,1)$ be Borel measurable with
$0<\mathcal H^s(E)<\infty$. Then, for almost every
$\theta\in[0,1]$,
\[
\mathcal H^m\Bigl(
    \bigl\{u\in V_\theta:
    \dim_{\mathrm H}
    \bigl(E\cap\pi_\theta^{-1}\{u\}\bigr)=s-m
    \bigr\}
\Bigr)>0.
\]
\end{corollary}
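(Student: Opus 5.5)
The plan is to reduce the corollary to the abstract slicing principle in \cite[Theorem~1.2]{Mattila2024}. That principle, for a general family of projections, converts almost-everywhere absolute continuity of projected Frostman measures into almost-everywhere lower bounds for the dimension of the fibres. Theorem~\ref{thm:measure} supplies exactly this absolute continuity for our family $\{\pi_\theta\}$, so the work is to match hypotheses and add the easy upper bound.

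\emph{Step 1: construct good measures.} Given $E$ with $0<\mathcal H^s(E)<\infty$, choose a compact $F\subset E$ with $\mathcal H^s(F)>0$. By the upper density theorem, $\limsup_{r\to0}\mathcal H^s(F\cap B(x,r))/r^s\le 2^s$ for $\mathcal H^s$-a.e.\ $x\in F$. By an Egorov-type argument we can pass to a compact $F'\subset F$ of positive measure on which $\mu:=\mathcal H^s|_{F'}$ satisfies $\mu(B(x,r))\lesssim r^s$ for all $x$ and $r$. Since $\mu$ is supported in $B^n(0,1)$, this implies $c_\alpha(\mu)<\infty$ for every $\alpha\le s$. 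In particular, for each $\alpha_k\uparrow s$ with $\alpha_k>m$, a positive multiple of $\mu$ satisfies \eqref{eq:frostman}.

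\emph{Step 2: lower bound on the fibres.} Theorem~\ref{thm:measure} applied with each $\alpha_k$ gives \eqref{eq:averaged-Lq}. Hence, for a.e.\ $\theta$ (a countable intersection over $k$), $\pi_{\theta\#}\mu$ has an $L^q$ density on $V_\theta$. This is the input required in \cite[Theorem~1.2]{Mattila2024}. Its conclusion is that for a.e.\ such $\theta$ the sliced measures $\mu_{\theta,u}$ exist, are supported on $F'\cap\pi_\theta^{-1}\{u\}$, and satisfy $\dim_{\mathrm H}(E\cap\pi_\theta^{-1}\{u\})\ge \alpha_k-m$ for $\pi_{\theta\#}\mu$-a.e.\ $u$. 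Letting $k\to\infty$ gives $\ge s-m$. Because $\pi_{\theta\#}\mu$ is nonzero and absolutely continuous, any set of full $\pi_{\theta\#}\mu$-measure has positive $\mathcal H^m$ measure. So the set of $u$ with fibre dimension $\ge s-m$ has positive $\mathcal H^m$-measure.

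\emph{Step 3: upper bound on the fibres.} We need $\dim_{\mathrm H}(E\cap\pi_\theta^{-1}\{u\})\le s-m$ for $\mathcal H^m$-a.e.\ $u$, for every $\theta$. This follows from the Eilenberg (coarea) inequality for the Lipschitz map $\pi_\theta$:
\[
\int_{V_\theta}^{*}\mathcal H^{s-m}\bigl(E\cap\pi_\theta^{-1}\{u\}\bigr)\,d\mathcal H^m(u)\lesssim\mathcal H^s(E)<\infty .
\]
Hence the fibres have finite $\mathcal H^{s-m}$-measure for $\mathcal H^m$-a.e.\ $u$. Intersecting with the positive-measure set from Step~2 gives the claimed equality on a set of positive $\mathcal H^m$-measure.

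\emph{Main obstacle.} The delicate point is verifying that the precise hypotheses of \cite[Theorem~1.2]{Mattila2024} are met. They may be stated as an energy or $L^2$ condition, or for all $\alpha<s$ at once, rather than as the $L^q$, $q<2$, bound we have. If an $L^2$ statement is required, I would first try to run Mattila's slicing argument directly: define $\mu_{\theta,u}$ as weak limits of $\delta^{-m}\mu|_{\pi_\theta^{-1}B(u,\delta)}$ and bound their $(\alpha-m-\varepsilon)$-energies. Integrating in $u$ and $\theta$, Hölder's inequality with the $L^q$ bound \eqref{eq:averaged-Lq} would give an $\varepsilon$-loss in the exponent, which is harmless after letting $\alpha\uparrow s$.
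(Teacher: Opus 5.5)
Your proposal is correct and follows the paper's route. The paper obtains the corollary simply by feeding Theorem~\ref{thm:measure} into \cite[Theorem~1.2]{Mattila2024}. Your Steps~1 and~3 make explicit standard parts of that reduction: the Frostman measure $\mathcal H^s|_{F'}$ (for which you can take $\alpha=s$ directly, since $m<s<n$) and the Eilenberg upper bound.

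If you ever need to reprove the fibre lower bound by hand, drop the energy fallback. Slice energies are controlled by $L^2$ norms of projections of localized pieces, and a bound with $q<2$ cannot supply these. Use instead that \eqref{eq:averaged-Lq} is uniform over Frostman measures and linear in the mass. Localizing $\mu$ to dyadic $2^{-k}$-cubes $Q$ and rescaling then gives
\[
\sum_Q\int_0^1\|\pi_{\theta\#}(\mu|_Q)\|_{L^q(V_\theta)}^q\,d\theta\lesssim 2^{-k(s-m)(q-1)}\mu(\mathbb R^n).
\]
Summing against $2^{k(s-m-\epsilon)(q-1)}$ then yields lower local dimension at least $s-m-\epsilon$ for almost every slice measure.
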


Theorem \ref{thm:main} extends the previously known cases  $(n,m)=(2,1), (3,2)$ and $(3,1)$ to all $1 \leq m<n$.
The first case follows from Marstrand’s projection theorem \cite{Marstrand1954}, while the latter two were proved in \cite[Theorem~2]{gan2022restricted} and \cite{MR4694174}, respectively. Gan, Guo, and Wang \cite{gan2024restricted} proved that
\begin{equation}
    \dim_H \pi_{\theta}(E) = \min \{m, \dim_H E\}
\end{equation}
for almost every $\theta$. Theorem \ref{thm:main} establishes the corresponding positive-measure result when $\dim_H E >m$.
\medskip

The proof combines the good--bad wave packet decomposition of
\cite{gan2022restricted}, originated in \cite{GuthIosevichOuWang2020}, with the multiscale frequency decomposition and iterated
decoupling scheme of \cite[Sections~3--5]{gan2024restricted}. Let us explain the main difficulties in extending the three-dimensional arguments to general $n$ and $m$. On the one hand, in the three dimensional case, the frequency is decomposed according to the distance to ``the most degenerate cone''. In higher dimensions, ``the most degenerate cone'' is not a hypersurface, and it is not entirely clear how to decompose the frequencies. On the other hand, even after obtaining such a frequency decomposition, following the strategy of \cite{gan2022restricted} would require a  refined decoupling for the resulting frequency pieces. However, the resulting frequency pieces involve multiscales, and it is not clear how to obtain a refined decoupling for the pieces. To overcome these difficulties, we introduce the frequency decomposition of $\mu$ depending on the parameter $\theta$. We also avoid the use of refined decouplings. In particular, our proof slightly simplifies the proofs of \cite[Theorem~2]{gan2022restricted} and \cite{MR4694174} in the sense that we do not use refined decouplings.

\subsection*{Notation and parameters}

Implicit constants may depend on $n,m$, on the fixed curve $\gamma$, and
on fixed cutoff functions. Unless otherwise indicated, an unsubscripted
constant $C>0$ has only this permitted dependence and may change from
line to line. In particular, all harmless losses arising from the
$2^{\delta j}$-enlargement of the wave packets are denoted by
$2^{C\delta j}$; different occurrences need not involve the same
numerical value of $C$, and finite products of such losses are absorbed
into a single factor of the same form. We use $A\lesssim B$ to mean
$A\leq CB$. A fixed dilation of a box is harmless and may change from
line to line. The notation $\operatorname{RapDec}(2^j)$ denotes a
quantity bounded by $C_N2^{-Nj}$ for every $N$.

We use only two small parameters. We shall choose $0<\epsilon\ll1$ and
set
\begin{equation}\label{eq:alpha-zero}
    \alpha_0:=m-\epsilon.
\end{equation}
We retain the notation $\alpha_0$ throughout the argument and substitute
$m-\alpha_0=\epsilon$ only in the final exponent calculation. Whenever
a decoupling or pigeonholing argument permits an arbitrarily small
$2^{\eta j}$ loss, we invoke it with $\eta$ equal to a sufficiently
small fixed multiple of $\epsilon$; all such losses are absorbed into
factors $2^{C\epsilon j}$. The parameter $\delta>0$ enlarges the wave
packets and is chosen last, with $\delta\ll\epsilon$.

Throughout the proof, $\mu$ satisfies the normalization
\begin{equation}\label{eq:normalization}
    c_\alpha(\mu)\leq1,
    \qquad 0<\mu(\mathbb R^n)\leq1.
\end{equation}
The second inequality follows from $\operatorname{supp}\mu\subset
B^n(0,1)$ and the first.

\subsection{Acknowledgements}

Changkeun Oh was supported by the POSCO Science Fellowship of POSCO TJ Park Foundation, and the National
Research Foundation of Korea (NRF) grant funded by the Korea government (MSIT) RS-2024-00341891. The authors are grateful to Shengwen Gan for sharing his note on restricted orthogonal projections onto lines, and for valuable discussions. The authors also would like to thank Terry Harris for introducing Corollary \ref{cor:sections} to us.

\subsection{Tool and computational resource disclosure}

ChatGPT-5.5 Pro was used solely to assist with language editing and the presentation of the manuscript. It was not used to generate any mathematical results or proofs. All mathematical content was independently verified by the authors, who take full responsibility for the manuscript.

\section{Preliminaries}\label{sec2}

In this section we set up the frequency geometry used throughout the proof.
In Subsection 2.1, we introduce the derivative frame and its orthonormal
Frenet frame, compare boxes expressed in these two frames, and state the precise frequency decomposition of the
annular portion of $V_\theta$. The proof of
this decomposition, obtained by following the iterative construction
of \cite[Sections~3--5]{gan2024restricted}, is given in the appendix. In Subsection~2.2, we discretize the parameter $\theta$.

\subsection{Frames and the $V_\theta$-adapted frequency decomposition}
Set
\[
    v_r(\theta):=\gamma^{(r)}(\theta),
    \qquad 1\leq r\leq n.
\]
By the non-degeneracy assumption \eqref{eq:nondegenerate}, the ordered
$n$-tuple
\[
    \bigl(v_1(\theta),\ldots,v_n(\theta)\bigr)
\]
is a basis of $\mathbb R^n$ for every $\theta\in[0,1]$. We call this
$\theta$-dependent ordered basis the \emph{derivative frame} of
$\gamma$. Let
\[
    e_1(\theta),\ldots,e_n(\theta)
\]
be its Gram--Schmidt orthonormalization, which we call the
\emph{Frenet frame}.

For a nonincreasing vector of side lengths
\[
    \mathbf L=(L_1,\ldots,L_n),
    \qquad L_1\geq\cdots\geq L_n>0,
\]
define
\begin{align}
B_v(\theta;\mathbf L)
&:=
\left\{
    \sum_{r=1}^n\lambda_rv_r(\theta):
    |\lambda_r|\leq L_r
\right\},
\label{eq:derivative-box}\\
B_e(\theta;\mathbf L)
&:=
\left\{
    \sum_{r=1}^n\lambda'_re_r(\theta):
    |\lambda'_r|\leq L_r
\right\}.
\label{eq:frenet-box}
\end{align}

We shall sometimes replace some of the coordinate bounds
\[
    |\lambda_r|\leq L_r
\]
by the two-sided bounds
\[
    |\lambda_r|\sim L_r,
    \qquad r\in\mathcal A,
\]
for some $\mathcal A\subset\{1,\ldots,n\}$. A \emph{sign component}
of such a region is obtained by fixing signs
$\sigma_r\in\{-1,1\}$ for $r\in\mathcal A$ and replacing the preceding
conditions by
\[
    C^{-1}L_r\leq \sigma_r\lambda_r\leq CL_r,
    \qquad r\in\mathcal A,
\]
where $C>0$ is the constant implicit in $\sim$. Thus each sign
component is a rectangular box, and the original region is the union
of at most $2^{|\mathcal A|}$ such boxes.

\begin{lemma}[Derivative-frame and Frenet-frame boxes]
\label{lem:frame-comparison}
There is a constant $C=C_\gamma\geq1$ such that, uniformly for
$\theta\in[0,1]$ and every nonincreasing $\mathbf L$,
\begin{equation}\label{eq:frame-comparison}
    C^{-1}B_e(\theta;\mathbf L)
    \subset B_v(\theta;\mathbf L)
    \subset CB_e(\theta;\mathbf L).
\end{equation}
More generally, let $\mathcal A\subset\{1,\ldots,n-1\}$ and suppose
that
\begin{equation}\label{eq:active-gap}
    L_{r+1}\leq c_\gamma L_r,
    \qquad r\in\mathcal A,
\end{equation}
where $c_\gamma>0$ is sufficiently small in terms of the fixed curve.
Then every sign component of a derivative-frame region satisfying
\[
    |\lambda_r|\sim L_r,
    \qquad r\in\mathcal A,
\]
is contained in a fixed dilation of an $O_n(1)$-union of
Frenet-frame sign components satisfying
\[
    |\lambda'_r|\sim L_r,
    \qquad r\in\mathcal A,
\]
and conversely.
\end{lemma}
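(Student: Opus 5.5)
The whole argument rests on the triangular change of basis between the two frames. Since $(e_r)$ is the Gram--Schmidt orthonormalization of $(v_r)$, we can write
\[
v_r(\theta)=\sum_{s\le r}a_{rs}(\theta)e_s(\theta),
\]
where the matrix $A(\theta)=(a_{rs})$ is lower triangular and invertible, with diagonal entries $a_{rr}>0$. Both $A$ and $A^{-1}$ depend continuously on $\theta\in[0,1]$. By compactness and \eqref{eq:nondegenerate}, their entries are bounded by some $K=K_\gamma$, and the diagonal entries are bounded below by $K^{-1}$. The inverse $A^{-1}$ is also lower triangular, so each $e_s$ is a combination of $v_1,\dots,v_s$.

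For a point $\sum_r\lambda_r v_r$, the Frenet coordinates are
\[
\lambda'_s=\sum_{r\ge s}a_{rs}\lambda_r .
\]
Each $\lambda'_s$ therefore involves only the indices $r\ge s$, and for these $L_r\le L_s$ because $\mathbf L$ is nonincreasing. Hence $|\lambda_r|\le L_r$ for all $r$ gives
\[
|\lambda'_s|\le nK\,L_s .
\]
This is the inclusion $B_v\subset CB_e$. The reverse inclusion follows the same way from $\lambda_s=\sum_{r\ge s}b_{rs}\lambda'_r$, where $(b_{rs})=A^{-1}$ is again lower triangular and bounded.

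For the two-sided statement, fix $r\in\mathcal A$. The same formula splits as
\[
\lambda'_r=a_{rr}\lambda_r+\sum_{t>r}a_{tr}\lambda_t .
\]
In the tail, $|\lambda_t|\le CL_t\le CL_{r+1}\le Cc_\gamma L_r$. The condition \eqref{eq:active-gap} is applied only at the index $r$ itself, which is enough because $L_t\le L_{r+1}$ for every $t>r$.

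Consider a sign component with $C^{-1}L_r\le\sigma_r\lambda_r\le CL_r$. The main term $a_{rr}\lambda_r$ has the sign $\sigma_r$ and size between $K^{-1}C^{-1}L_r$ and $KCL_r$. The tail is at most $nKCc_\gamma L_r$. We choose $c_\gamma$ small enough in terms of $K$, $n$ and $C$ that the tail is at most half the lower bound of the main term. Then
\[
(2KC)^{-1}L_r\le\sigma_r\lambda'_r\le 2KCL_r .
\]
This holds simultaneously for every $r\in\mathcal A$, while the inactive coordinates satisfy $|\lambda'_s|\lesssim L_s$ by the first part.

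It follows that the derivative-frame sign component lies inside a single Frenet-frame sign component, with the same signs and the enlarged constant $2KC$ in $\sim$. Fixed dilations and changes of the implicit constant in $\sim$ are harmless by the conventions above. The "$O_n(1)$-union" allowance is only needed to cover a region with unspecified signs: such a region is the union of at most $2^{|\mathcal A|}$ sign components, and each is handled separately. The converse direction is the same argument with $A^{-1}$ in place of $A$.

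The only delicate point is the ordering of constants, since $c_\gamma$ must be chosen after the $\sim$-constant $C$. I would make this explicit by fixing the $\sim$-constant once and for all, which the convention allows, and then taking $c_\gamma\le (4nK^2C^2)^{-1}$.
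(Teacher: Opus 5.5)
Your proof is correct and follows the paper's argument. The paper also writes $V(\theta)=E(\theta)R(\theta)$ with $R$ upper triangular (your $A$ is its transpose), obtains both box inclusions from triangularity and the monotonicity of $\mathbf L$, keeps the lower bound on $\lambda'_r$ by using the gap condition to make the tail $\sum_{q>r}R_{rq}\lambda_q$ small, and gets the converse from $R^{-1}$. Your two additions are harmless refinements of points the paper leaves implicit: using the positive Gram--Schmidt diagonal to preserve signs (so one Frenet sign component suffices instead of a split over signs), and stating explicitly that $c_\gamma$ depends on the $\sim$-constant.
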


\begin{proof}
Write
\[
    V(\theta)
    =
    [v_1(\theta)\ \cdots\ v_n(\theta)],
    \qquad
    E(\theta)
    =
    [e_1(\theta)\ \cdots\ e_n(\theta)].
\]
Gram--Schmidt gives
\begin{equation}\label{eq:QR-frame}
    V(\theta)=E(\theta)R(\theta),
\end{equation}
where $R(\theta)$ is upper triangular. By smoothness, compactness,
and \eqref{eq:nondegenerate}, the matrices $R(\theta)$ and
$R(\theta)^{-1}$ are uniformly bounded, and their diagonal entries
are uniformly bounded away from zero.

If
\[
    x=V(\theta)\lambda=E(\theta)\lambda',
\]
then $\lambda'=R(\theta)\lambda$. Hence
\[
    |\lambda'_r|
    \leq
    \sum_{q=r}^n|R_{rq}(\theta)|L_q
    \lesssim_\gamma L_r,
\]
because $L_q\leq L_r$ for $q\geq r$. This proves the second inclusion
in \eqref{eq:frame-comparison}. The first follows in the same way
from the upper triangular matrix $R(\theta)^{-1}$.

For the sign-component assertion, if $r\in\mathcal A$, then
\[
    \lambda'_r
    =
    R_{rr}(\theta)\lambda_r
    +
    \sum_{q>r}R_{rq}(\theta)\lambda_q.
\]
By \eqref{eq:active-gap}, the second term has size at most
$C_\gamma c_\gamma L_r$. Taking $c_\gamma$ sufficiently small
compared with the uniform lower bound for $|R_{rr}|$ preserves the
lower bound
\[
    |\lambda'_r|\gtrsim L_r.
\]
The corresponding upper bound follows from
\eqref{eq:frame-comparison}. Splitting according to the finitely many
choices of signs gives the stated $O_n(1)$ collection. Applying the
same argument to $R^{-1}$ proves the converse.
\end{proof}

The frequency decomposition of \cite{gan2024restricted} is naturally expressed
in the derivative frame. Lemma~\ref{lem:frame-comparison} shows that,
after an $O_n(1)$ subdivision into sign components, every resulting
piece is uniformly comparable with a piece having the same side
lengths in the Frenet frame. We therefore formulate all frequency
regions below directly in Frenet coordinates.

For fixed $\theta\in[0,1]$ and $j\geq1$, define
\begin{equation}\label{eq:Ptheta-j}
\begin{split}
P_{\theta,j}
:=
\Bigl\{
    \sum_{r=1}^n\lambda_re_r(\theta):
    &\ |\lambda_r|\lesssim2^j
       \quad (1\leq r\leq m),\\
    &\ |\lambda_r|\lesssim1
       \quad (m<r\leq n),\\
    &\ \max_{1\leq r\leq m}|\lambda_r|
       \sim2^j
\Bigr\}.
\end{split}
\end{equation}
Since the frame is orthonormal, there is a fixed constant $C$ such
that
\begin{equation}\label{eq:V-covered-by-P}
    V_\theta
    \subset
    B(0,C)\cup\bigcup_{j\geq1}P_{\theta,j}.
\end{equation}
Choose smooth cutoffs $\Phi_{\theta,j}$ adapted to
$P_{\theta,j}$, supported in $2P_{\theta,j}$, such that
\[
    \sum_{j\geq1}\Phi_{\theta,j}(\xi)=1,
    \qquad
    \xi\in V_\theta\setminus B(0,C).
\]

The precise output of the decomposition needed in this paper is the
following.

\begin{proposition}[$V_\theta$-adapted frequency decomposition]
\label{prop:Vtheta-decomposition}
Fix $\theta\in[0,1]$ and $j\geq1$. There is a finite collection of
output data of the following form:
\begin{equation}
     \bigl(J,(m_i)_{i=1}^J,(n_i)_{i=1}^J,\bbk\bigr)
\end{equation}
with
\[
    1\leq J\lesssim_n1,
\]
\[
    1\leq m_1\leq\cdots\leq m_J\leq m,
    \qquad
    n=n_1\geq\cdots\geq n_J=m+1,
\]
and
\[
    \bbk=(k_1,\ldots,k_J),
    \qquad k_i\in\mathbb N.
\]
For a fixed output, let
\[
    \mathcal M_{\bbk}
    :=
    \{m_1,\ldots,m_J\}
    \subset\{1,\ldots,m\},
\]
and, for $1\leq r\leq m$, define
\begin{equation}\label{eq:a-r}
    a_r=a_r(j,\bbk)
    :=
    j-\sum_{i: m_i < r}(r-m_i)k_i.
\end{equation}
Then $P_{\theta,j}$ admits a smooth decomposition subordinate to the regions
\begin{equation}\label{eq:Omega-compact}
\begin{split}
\Omega^j_{\bbk,\theta}
:=
\Bigl\{
    \sum_{r=1}^n\lambda_re_r(\theta):
    &\ |\lambda_r|\lesssim2^{a_r}
       \quad
       \bigl(
          r\in\{1,\ldots,m\}\setminus\mathcal M_{\bbk}
       \bigr),\\
    &\ |\lambda_r|\sim2^{a_r}
       \quad
       \bigl(
          r\in\mathcal M_{\bbk}
       \bigr),\\
    &\ |\lambda_r|\lesssim1
       \quad
       (m<r\leq n)
\Bigr\}.
\end{split}
\end{equation}
For every output and every \(q=m+1,\ldots,n\), the parameters satisfy
\begin{equation}\label{eq:output-identities}
    0
    \leq
    j-\sum_{i:n_i\geq q}(q-m_i)k_i
    < n.
\end{equation}
In particular,
\[
    \sum_{i:n_i\geq q}(q-m_i)k_i
    =
    j+O_n(1).
\]
\end{proposition}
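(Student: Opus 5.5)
We propose to build the decomposition by an explicit iterative stopping-time procedure in the derivative frame, imitating \cite[Sections~3--5]{gan2024restricted}, and then to pass to Frenet coordinates with Lemma~\ref{lem:frame-comparison}. The geometric input is the Taylor expansion of the frame: for $|s|\le 2^{-k}$,
\[
    v_r(\theta+s)=\sum_{p\ge r}\frac{s^{p-r}}{(p-r)!}v_p(\theta)+O(|s|^{n-r+1}).
\]
The upshot is that localizing the projection parameter to an interval of length $2^{-k}$ rescales the $r$-th derivative-frame coordinate by $2^{-(r-m_i)k}$ relative to the currently dominant direction $m_i$. This is the origin of the exponents $a_r=j-\sum_{m_i<r}(r-m_i)k_i$.

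\emph{Step 1: the iteration.} Start with $J=1$, $m_1$ the index $r\le m$ at which $\max_{1\le r\le m}|\lambda_r|\sim 2^j$ is attained (the first such index, after a dyadic splitting of $P_{\theta,j}$ into $O_n(1)$ pieces), and $n_1=n$. Given current data $(m_i,n_i)$, we decompose dyadically in the size of the coordinate along the next non-dominant direction, obtaining a scale $2^{-k_i}$. We stop increasing $k_i$ when one of two things happens. Either a coordinate with index $r\le m$ becomes comparable to its predicted size $2^{a_r}$, which changes the dominant index to $m_{i+1}>m_i$. Or a coordinate with index $q>m$ reaches size $\lesssim1$, which lowers the active top index to $n_{i+1}=q-1$. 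Each step strictly increases $m_i$ or decreases $n_i$, so $J\le 2n$, and the procedure terminates once $n_J=m+1$. The regions obtained at termination are exactly the regions $\Omega^j_{\bbk,\theta}$ of \eqref{eq:Omega-compact}: the coordinates in $\mathcal M_{\bbk}$ have $|\lambda_r|\sim2^{a_r}$, the other coordinates $r\le m$ are dominated by $2^{a_r}$, and the coordinates $r>m$ are $\lesssim1$. A smooth partition of unity subordinate to these dyadic stopping regions gives the smooth decomposition.

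\emph{Step 2: the output identities.} For $q>m$, the predicted size of the $q$-th coordinate at stage $i$ is $2^{j-\sum_{i'\le i}(q-m_{i'})k_{i'}}$. The coordinate $q$ stays active exactly for the stages with $n_i\ge q$. It leaves at the first dyadic scale at which this predicted size is $\lesssim1$, and it must leave by termination since $n_J=m+1$. Hence the exponent $j-\sum_{n_i\ge q}(q-m_i)k_i$ is nonnegative. It is also smaller than one dyadic step, and that step is at most $q-m_i\le n$. This gives \eqref{eq:output-identities}.

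\emph{Step 3: Frenet coordinates.} Each stopping region is a sign-component union in the derivative frame. The gaps \eqref{eq:active-gap} hold once $k_i$ exceeds a fixed constant; the finitely many small values of $k_i$ are absorbed into the $O_n(1)$ constants. So Lemma~\ref{lem:frame-comparison} converts each region to a comparable Frenet region $\Omega^j_{\bbk,\theta}$.

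\emph{Main obstacle.} The hard step is justifying the rescaling in Step 1: the cross terms of the Taylor expansion and the error terms must not destroy the clean triangular size profile. Our first approach is an induction on $i$ that applies the triangular comparison of Lemma~\ref{lem:frame-comparison} at each stage. We would keep the error $O(2^{-(n-r+1)k})$ below $1$ using the bound from \eqref{eq:output-identities} at the previous stage.
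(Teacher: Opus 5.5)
\textbf{Same route as the paper, but Step 3 fails as written.} Your architecture matches the paper's: a Gan--Guo--Wang stopping time with dominant index $m_i$, active top index $n_i$ and dyadic scales $s_i=2^{-k_i}$, the cap (endpoint) scale of the top order $q=n_i$ giving \eqref{eq:output-identities}, then Lemma~\ref{lem:frame-comparison}. The problem is your choice of $m_1$, and implicitly $m_{i+1}$, as the first index attaining the maximum. The paper instead takes the \emph{largest} index whose coefficient is comparable to its predicted size, so that all higher coefficients are $\ll$ theirs.

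With your choice, on the part of $P_{\theta,j}$ where some $r>m_1$ also has $|\lambda_r|\sim2^j$, your rule stops at $k_1=0$ or at a small $k_1$. Since $a_r-a_{r+1}=\sum_{\nu:m_\nu\le r}k_\nu$, this gives $2^{a_{m_1+1}}\sim2^{a_{m_1}}$, so \eqref{eq:active-gap} fails at $r=m_1\in\mathcal M_{\bbk}$. Your remark that small $k_i$ are ``absorbed into the $O_n(1)$ constants'' does not repair this. In $\lambda'_{m_1}=R_{m_1m_1}\lambda_{m_1}+R_{m_1,m_1+1}\lambda_{m_1+1}+\cdots$ the two leading terms can cancel. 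So a point with $|\lambda_{m_1}|\sim2^j$ need not lie in any Frenet box with $|\lambda'_{m_1}|\sim2^{a_{m_1}}$. Deleting stages with $k_i=0$ is harmless. Changing a small positive $k_i$, however, moves the sums in \eqref{eq:output-identities} by $O(1)$ and can leave the window $[0,n)$.

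The largest-index convention fixes this. Every non-endpoint stop then has $s_i\ll1$, and an endpoint first stage has $k_1\approx j/(n-m_1)$. Since $m_1=\min\mathcal M_{\bbk}$, the term $k_1$ appears in every sum $\sum_{\nu:m_\nu\le r}k_\nu$ with $r\in\mathcal M_{\bbk}$. For $r=m$, use $a_m\ge K_{\bbk}$, which follows from \eqref{eq:output-identities} with $q=m+1$, against $L_{m+1}=1$. Thus \eqref{eq:active-gap} holds on all of $\mathcal M_{\bbk}$ for large $j$, and your Steps 1--3 go through.

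\textbf{The obstacle you single out does not arise here.} Your stopping rule compares the coordinates of $\xi$ in the single frame at the fixed $\theta$ with the explicit numbers $2^{a_r}$ and with the caps. No rescaled curve and no Taylor expansion enter the covering or the identities. In the paper, Taylor expansion at $t_I$ serves to fit the union over $t\in I$ of the stage-$i$ regions into one box. That is what the decoupling of Proposition~\ref{prop:output-decoupling} needs, as does Lemma~\ref{lem:nearby-output-boxes} in Frenet form; Proposition~\ref{prop:Vtheta-decomposition} itself does not. For this statement alone you could even run the same stopping time directly on the Frenet coordinates at $\theta$ and drop Step 3. The gap condition then matters only later, when these boxes are compared with derivative-frame boxes in the decoupling.
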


\begin{proof}
See Appendix~\ref{app:frequency-decomposition}. There we follow the
iterative decomposition in \cite[Sections~3--5]{gan2024restricted}, keep track
of the active indices and the anisotropic rescalings, and then use
Lemma~\ref{lem:frame-comparison} to pass from the derivative frame to
the Frenet frame.
\end{proof}

Here and below, an \emph{output} means the full collection of data
\[
    \bigl(J,(m_i)_{i=1}^J,(n_i)_{i=1}^J,\bbk\bigr).
\]
For brevity, we label an output by $\bbk$ and leave the corresponding
sequences $(m_i)$ and $(n_i)$ implicit. 

Each region $\Omega^j_{\bbk,\theta}$ is the union of $O_n(1)$
rectangular sign components. We call these rectangular components
the \emph{output boxes}. All Fourier support statements below are
understood up to fixed dilations of the corresponding output boxes.

\subsection{Discretization in $\theta$}

For an output $\bbk$, define
\begin{equation}\label{eq:K-ell}
    K_{\bbk}
    :=
    \sum_{i=1}^J k_i,
    \qquad
    \ell_{\bbk}
    :=
    2^{-K_{\bbk}}.
\end{equation}
To discretize the parameter $\theta$, we use the following lemma. The lemma says that the sets $\Omega_{\bbk,\theta}^j$ and $\Omega_{\bbk,\theta'}^j$ are morally the same whenever $|\theta-\theta'| \lesssim \ell_{\bbk}$.

\begin{lemma}[Stability of output boxes under nearby parameters]
\label{lem:nearby-output-boxes}
There exists a constant $c_0=c_0(\gamma)>0$ with the following
property. Fix an output $\bbk$, and suppose that
\[
    |\theta-\theta'|\leq c_0\ell_{\bbk}.
\]
For a fixed choice of signs in the coordinates indexed by
$\mathcal M_{\bbk}$, the corresponding output boxes of
$\Omega^j_{\bbk,\theta}$ and $\Omega^j_{\bbk,\theta'}$ are contained
in fixed dilations of one another.
\end{lemma}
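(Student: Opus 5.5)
We will show that, after moving from $\theta$ to $\theta'$, each Frenet coordinate of a point of an output box of $\Omega^j_{\bbk,\theta}$ changes by at most a constant multiple of the corresponding side length. Write $\xi=\sum_r\lambda_re_r(\theta)$ with $|\lambda_r|\lesssim 2^{a_r}$ for $r\le m$ and $|\lambda_r|\lesssim1$ for $r>m$. Its coordinates in the new frame are $\lambda'_s=\langle\xi,e_s(\theta')\rangle$. We set $a_r:=0$ for $r>m$, so the box has side lengths $L_r=2^{a_r}$. For $r\le m$ we have $a_r\ge0$; this follows from \eqref{eq:output-identities}, since $a_r\ge a_{m+1}$ formally, and that quantity is $\ge0$ for $q=m+1$ and $n_i\ge m+1$.

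The key input is the Frenet structure. The matrix $A(\theta)=E(\theta)^TE'(\theta)$ is skew-symmetric and tridiagonal: $e_r'$ lies in the span of $e_{r-1}$ and $e_{r+1}$. This holds because $e_r$ lies in $\operatorname{span}\{v_1,\dots,v_r\}$ and $v_r'=v_{r+1}$. Taylor expanding then gives
\[
\langle e_r(\theta),e_s(\theta')\rangle=O\bigl(|\theta-\theta'|^{|r-s|}\bigr),
\qquad\text{with}\quad \langle e_r(\theta),e_r(\theta')\rangle=1+O(|\theta-\theta'|).
\]
To prove this, note that the $k$-th derivative of $e_r$ lies in $\operatorname{span}\{e_{r-k},\dots,e_{r+k}\}$, with smooth bounded coefficients. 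Hence
\[
\lambda'_s=\lambda_s\bigl(1+O(c_0\ell)\bigr)+\sum_{r\ne s}O\bigl((c_0\ell_{\bbk})^{|r-s|}\bigr)2^{a_r}.
\]
The whole claim therefore reduces to the gap inequality
\[
\ell_{\bbk}^{|r-s|}2^{a_r}\lesssim 2^{a_s}\qquad\text{for all }r,s\in\{1,\dots,n\}.
\]
Given this, taking $c_0$ small also preserves the two-sided bounds $|\lambda'_s|\sim2^{a_s}$ for $s\in\mathcal M_{\bbk}$, with the same sign. The error term is then at most $Cc_0 2^{a_s}$. The converse inclusion follows by symmetry in $\theta,\theta'$.

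The gap inequality is the main step, and it comes from the explicit formula \eqref{eq:a-r}. For $r<s\le m$, the sequence $a$ is nonincreasing, so $\ell^{s-r}2^{a_r}\le 2^{a_r}$ is not enough; we need $a_r-a_s\le (s-r)K_{\bbk}$. Indeed
\[
a_r-a_s=\sum_{i:m_i<s}\bigl((s-m_i)_+-(r-m_i)_+\bigr)k_i\le(s-r)\sum_ik_i=(s-r)K_{\bbk}.
\]
For $r>s$ we have $a_r\le a_s$ directly, since $a$ is nonincreasing in $r$. When one or both indices exceed $m$, we use $L_r=1\le 2^{a_s}$ for $s\le m$. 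In the remaining case $r\le m<s$, we need $a_r\le (s-r)K_{\bbk}$. Here we use \eqref{eq:output-identities} with $q=s$:
\[
a_r\le j\le\sum_{i:n_i\ge s}(s-m_i)k_i+O(1).
\]
Each term satisfies $s-m_i\le s-r+(r-m_i)$, and the excess $\sum(r-m_i)_+k_i$ is exactly what was subtracted in $a_r$. A careful bookkeeping of the two sums, including the indices with $n_i<s$ that are missing from the second sum, gives $a_r\le(s-r)K_{\bbk}+O(1)$. I expect this bookkeeping, that is, matching the sum over $n_i\ge s$ against the sum over $m_i<r$, to be the only delicate point. The $O_n(1)$ loss is absorbed into the fixed dilation.
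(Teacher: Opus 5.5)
Your proposal is correct and follows the paper's proof essentially step for step. The tridiagonal Frenet structure and Taylor expansion give $|\langle e_r(\theta),e_s(\theta')\rangle|\lesssim|\theta-\theta'|^{|r-s|}$. Everything then reduces to the gap inequality $L_r\ell_{\bbk}^{|r-s|}\lesssim L_s$, and the $\mathcal M_{\bbk}$ coordinates keep their sign and two-sided bounds once $c_0$ is small.

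The bookkeeping you flag as delicate is in fact one line, and the paper only uses $q=m+1$ in \eqref{eq:output-identities}. Since every $n_i\geq m+1$, for $r\leq m<s$ one gets $a_r<\sum_i\bigl((m+1-m_i)-(r-m_i)_+\bigr)k_i+n\leq(m+1-r)K_{\bbk}+n\leq(s-r)K_{\bbk}+n$. Your version with $q=s$ also closes the same way, by simply discarding the nonnegative terms $(r-m_i)_+k_i$ with $n_i<s$.
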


\begin{proof}
The Frenet frame satisfies
\[
    e_r'(t)\in
    \operatorname{span}\{e_{r-1}(t),e_{r+1}(t)\},
\]
with the usual convention at the endpoints. 
Consequently,
\[
    e_r^{(k)}(t)
    \in
    \operatorname{span}
    \{e_s(t):|s-r|\leq k\}.
\]

It follows by Taylor expansion that, for $r\neq r'$,
\begin{equation}\label{eq:frenet-variation}
    \bigl|
        \langle e_r(\theta),e_{r'}(\theta')\rangle
    \bigr|
    \lesssim_\gamma
    |\theta-\theta'|^{|r-r'|}.
\end{equation}
Indeed, if $d=|r-r'|$, then all terms of order less than $d$ in the
Taylor expansion of $e_r(\theta)$ at $\theta'$ are orthogonal to
$e_{r'}(\theta')$. We also have
\begin{equation}\label{eq:frenet-diagonal}
    \langle e_r(\theta),e_r(\theta')\rangle
    =1+O_\gamma(|\theta-\theta'|^2).
\end{equation}

Set
\[
    L_r:=
    \begin{cases}
        2^{a_r}, & 1\leq r\leq m,\\
        1,       & m<r\leq n.
    \end{cases}
\]
The definition of $a_r$, together with
\eqref{eq:output-identities} for $q=m+1$, gives
\[
    L_r\lesssim 2^{(r'-r)K_{\bbk}}L_{r'},
    \qquad r<r'.
\]
Writing $\Delta=|\theta-\theta'|$ and using
$\Delta\leq c_0 2^{-K_{\bbk}}$, we therefore obtain
\begin{equation}\label{eq:nearby-frame-mixing}
    L_r\Delta^{|r-r'|}
    \lesssim c_0^{|r-r'|}L_{r'},
    \qquad r\neq r'.
\end{equation}

Let
\[
    x=\sum_{r=1}^n\lambda_re_r(\theta)
\]
belong to an output box of $\Omega^j_{\bbk,\theta}$, and write
\[
    x=\sum_{r'=1}^n\lambda'_{r'}e_{r'}(\theta').
\]
Then
\[
    \lambda'_{r'}
    =
    \sum_{r=1}^n
    \lambda_r
    \langle e_r(\theta),e_{r'}(\theta')\rangle.
\]
Since $|\lambda_r|\lesssim L_r$, equations
\eqref{eq:frenet-variation} and
\eqref{eq:nearby-frame-mixing} imply
\[
    |\lambda'_{r'}|\lesssim L_{r'}
\]
for every $r'$.

If $r'\in\mathcal M_{\bbk}$, then $|\lambda_{r'}|\sim L_{r'}$, and
\eqref{eq:frenet-diagonal} gives
\[
    \lambda'_{r'}
    =
    \lambda_{r'}+O_\gamma(c_0L_{r'}).
\]
Choosing $c_0$ sufficiently small preserves both the sign of
$\lambda_{r'}$ and the condition $|\lambda'_{r'}|\sim L_{r'}$. Thus $x$
belongs to a fixed dilation of the corresponding output box of
$\Omega^j_{\bbk,\theta'}$. Interchanging $\theta$ and $\theta'$
proves the converse.
\end{proof}

Lemma~\ref{lem:nearby-output-boxes} allows us to replace the
continuously $\theta$-indexed output boxes by boxes based at a fixed
discrete set of parameters.

Fix the constant $c_0$ from
Lemma~\ref{lem:nearby-output-boxes}. For each output $\bbk$, let
$\Theta_{\bbk}$ be a maximal $c_0\ell_{\bbk}$-separated subset of
$[0,1]$. Since $c_0$ is fixed, we regard $\Theta_{\bbk}$ as an
$\ell_{\bbk}$-net. Given $\theta\in[0,1]$, choose
\[
    \theta_{\bbk} \in\Theta_{\bbk}
\]
such that
\begin{equation}\label{eq:nearest-output-point}
    |\theta-\theta_{\bbk}|
    \leq c_0\ell_{\bbk}.
\end{equation}
We divide [0,1] into intervals of length comparable to $\ell_{\bbk}$, so that the chosen $\theta_{\bbk}$ is fixed on each interval. We call these the \emph{output intervals}; each has length
comparable to $\ell_{\bbk}$.
By Lemma~\ref{lem:nearby-output-boxes}, for each output $\bbk$,
every output box of $\Omega^j_{\bbk,\theta}$ is contained in a fixed
dilation of the corresponding output box of
$\Omega^j_{\bbk,\theta_{\bbk}}$, and conversely. Thus, replacing the
boxes separately for each $\bbk$ and then taking the union over all
outputs, we obtain a net-indexed family that still covers the support
of $\Phi_{\theta,j}$ on $V_\theta$.

For fixed $\theta$, let $\mathcal T_{j,\bbk}(\theta)$ be the
collection of rectangular output boxes of
$\Omega^j_{\bbk,\theta_{\bbk}}$. Choose smooth cutoffs
\[
    \psi_{\tau,\theta},
    \qquad
    \tau\in\mathcal T_{j,\bbk}(\theta),
\]
uniformly adapted to $\tau$, supported in $2\tau$, and satisfying
\begin{equation}\label{eq:net-indexed-partition}
    \Phi_{\theta,j}
    =
    \sum_{\bbk}
    \sum_{\tau\in\mathcal T_{j,\bbk}(\theta)}
    \psi_{\tau,\theta}
    \qquad\text{on }V_\theta.
\end{equation}
Here the supporting box $\tau$ is based at the discrete parameter
$\theta_{\bbk}$, whereas the cutoff $\psi_{\tau,\theta}$ is allowed to
depend on the actual parameter $\theta$, since the partition is required
to hold on $V_\theta$ for $\Phi_{\theta,j}$.

The supporting boxes are indexed by the fixed nets
$\Theta_{\bbk}$. For each fixed $j$ and $\bbk$, the resulting
net-indexed family of boxes has uniformly bounded overlap.

\section{Wave packets and the good--bad decomposition}

In this section we use the net-indexed output boxes of Section~2 to
construct a wave packet decomposition.  We decompose each piece of \(\mu\) localized to an output box
\(\tau\) in frequency space into spatial packets adapted to translates
of the corresponding dual plank \(\tau^*\).  The Fourier slice theorem gives the corresponding decomposition of
\(\pi_{\theta\#}\mu\), and the localization of the packets allows us to
separate the contribution from the dyadic scales \(j\geq1\) into bad
and good components.
Sections~4 and~5 treat these two components, respectively.

\subsection{Wave packet decomposition}

For $\tau \in \mathcal T_{j,\bbk}(\theta)$, let $\mathbb T_\tau$ be a finitely overlapping cover of $\mathbb R^n$
by translates of $2^{\delta j}\tau^*$.  Choose a smooth partition of
unity $\{\eta_T:T\in\mathbb T_\tau\}$ adapted to this covering, with
\begin{equation}\label{eq:eta-partition}
    \sum_{T\in\mathbb T_\tau}\eta_T=1,
    \qquad
    \operatorname{supp}\widehat{\eta_T}
    \subset\tfrac1{100}(\tau-\tau).
\end{equation}
For $T\in\mathbb T_\tau$, define
\begin{equation}\label{eq:M-T-theta}
    M_{T,\theta}\mu
    :=\eta_T(\mu*\check\psi_{\tau,\theta}).
\end{equation}
Then
$\operatorname{supp}\widehat{M_{T,\theta}\mu}\subset2\tau$.
\begin{lemma}[Fourier slice theorem]\label{lem:fourier-slice}
Let $\nu$ be a finite complex Borel measure on $\mathbb R^n$.  Then, for every
$\theta\in[0,1]$,
\[
    \widehat{\pi_{\theta\#}\nu}(\xi)
    =
    \widehat{\nu}(\xi),
    \qquad \xi\in V_\theta,
\]
where the Fourier transform on the left is taken on the subspace
$V_\theta$.
\end{lemma}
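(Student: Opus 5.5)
The plan is to compute $\widehat{\pi_{\theta\#}\nu}$ directly from the definition of the pushforward. We identify $V_\theta$ with $\mathbb R^m$ through an orthonormal basis, say $e_1(\theta),\ldots,e_m(\theta)$ from the Frenet frame; this spans $V_\theta$ because Gram--Schmidt preserves the nested spans $\operatorname{span}\{v_1,\ldots,v_r\}$. With this identification the Fourier transform of a finite measure $\rho$ on $V_\theta$ is $\widehat\rho(\xi)=\int_{V_\theta}e^{-2\pi i\langle u,\xi\rangle}\,d\rho(u)$ for $\xi\in V_\theta$, where $\langle\cdot,\cdot\rangle$ is the restriction of the ambient inner product. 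If the paper's Fourier normalization differs, the same constants appear on both sides, so the conclusion is unaffected.

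First, we apply the change-of-variables formula for pushforward measures, $\int g\,d(\pi_{\theta\#}\nu)=\int g\circ\pi_\theta\,d\nu$, to the bounded continuous function $g(u)=e^{-2\pi i\langle u,\xi\rangle}$. This formula is immediate for indicator functions by the definition of the pushforward. It extends to simple functions by linearity and then to bounded Borel functions by uniform approximation. Since $\nu$ is a finite complex measure, we use its total variation $|\nu|$, so dominated convergence is available without difficulty. The result is
\[
\widehat{\pi_{\theta\#}\nu}(\xi)=\int_{\mathbb R^n}e^{-2\pi i\langle \pi_\theta x,\xi\rangle}\,d\nu(x).
\]

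Second, we use that $\pi_\theta$ is an orthogonal projection, hence self-adjoint, and that $\xi\in V_\theta$. Together these give $\langle\pi_\theta x,\xi\rangle=\langle x,\pi_\theta\xi\rangle=\langle x,\xi\rangle$. The integral above therefore equals $\int e^{-2\pi i\langle x,\xi\rangle}\,d\nu(x)=\widehat\nu(\xi)$, which is the claimed identity.

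The argument has no serious obstacle. The only point needing care is the first step: the pushforward integration identity for complex measures and the measurability of $\pi_\theta$. Both are routine, since $\pi_\theta$ is linear and therefore continuous.
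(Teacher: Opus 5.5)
Your proposal is correct and is essentially the paper's proof: you write the Fourier transform of the pushforward as $\int e^{-2\pi i\langle \pi_\theta x,\xi\rangle}\,d\nu(x)$ and use $\langle \pi_\theta x,\xi\rangle=\langle x,\xi\rangle$ for $\xi\in V_\theta$. The extra care you give to the pushforward integration identity for complex measures is routine, and the paper uses it implicitly.
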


\begin{proof}
For $\xi\in V_\theta$, the fact that $\pi_\theta$ is the orthogonal
projection onto $V_\theta$ gives
\[
    \langle \pi_\theta x,\xi\rangle
    =
    \langle x,\xi\rangle.
\]
Therefore
\[
\begin{split}
    \widehat{\pi_{\theta\#}\nu}(\xi)
    &=\int_{V_\theta}e^{-2\pi i\langle y,\xi\rangle}
      \,d(\pi_{\theta\#}\nu)(y)\\
    &=\int_{\mathbb R^n}e^{-2\pi i\langle \pi_\theta x,\xi\rangle}
      \,d\nu(x)\\
    &=\int_{\mathbb R^n}e^{-2\pi i\langle x,\xi\rangle}
      \,d\nu(x)
     =\widehat\nu(\xi).
\end{split}
\]
\end{proof}

Define $g_{\theta,0}$ on $V_\theta$ by
\[
    \widehat{g_{\theta,0}}(\xi)
    :=
    \left(1-\sum_{j\geq1}\Phi_{\theta,j}(\xi)\right)
    \widehat\mu(\xi),
    \qquad \xi\in V_\theta.
\]
By Lemma~\ref{lem:fourier-slice}, this is the low-frequency part of
$\pi_{\theta\#}\mu$.

For each $\tau$, the partition of unity \eqref{eq:eta-partition} gives
\[
    \sum_{T\in\mathbb T_\tau}M_{T,\theta}\mu
    =
    \mu*\check\psi_{\tau,\theta}.
\]
Combining this identity with \eqref{eq:net-indexed-partition} and
Lemma~\ref{lem:fourier-slice}, we obtain
\begin{equation}\label{eq:exact-wave-packet-decomp}
\pi_{\theta\#}\mu
=g_{\theta,0}
+\sum_{j\geq1}\sum_{\bbk}
\sum_{\tau\in\mathcal T_{j,\bbk}(\theta)}
\sum_{T\in\mathbb T_\tau}\pi_{\theta\#}M_{T,\theta}\mu
\end{equation}
in the sense of distributions on $V_\theta$.

\begin{lemma}[Wave packet localization]
\label{lem:wave-packet-localization}
Uniformly in $\theta$, for every
$\tau\in\mathcal T_{j,\bbk}(\theta)$,
$T\in\mathbb T_\tau$,
\begin{equation}\label{eq:L1-localization}
    \|M_{T,\theta}\mu\|_1
    \lesssim
    2^{C\delta j}\mu(100T)
    +\operatorname{RapDec}(2^j)\mu(\mathbb R^n).
\end{equation}
Moreover,
\[
    \sum_{\substack{T\in\mathbb T_\tau\\
                     100T\cap B^n(0,1)=\varnothing}}
    \|M_{T,\theta}\mu\|_1
    \leq
    \operatorname{RapDec}(2^j)\mu(\mathbb R^n).
\]
\end{lemma}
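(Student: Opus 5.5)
Write $\phi_\tau:=\check\psi_{\tau,\theta}$. Since $\psi_{\tau,\theta}$ is a smooth bump uniformly adapted to the box $2\tau$, $\phi_\tau$ is an $L^1$-normalized function concentrated on the dual plank $\tau^*$ centered at the origin, with Schwartz tails measured in the $\tau^*$-adapted norm. Concretely, we have
\[
    |\phi_\tau(x)|\lesssim_N |\tau^*|^{-1}\bigl(1+\|x\|_{\tau^*}\bigr)^{-N},
\]
where $\|x\|_{\tau^*}$ is the gauge of $\tau^*$. The constants are uniform in $\theta$, because the output boxes are rectangles in the orthonormal frame $e_r(\theta_{\bbk})$ and the cutoffs are uniformly adapted. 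Likewise $0\le\eta_T\lesssim 1$, and $\eta_T$ decays rapidly off $T$ at the scale of $2^{\delta j}\tau^*$. The plan is to estimate
\[
    \|M_{T,\theta}\mu\|_1\le\int\!\!\int|\eta_T(x)|\,|\phi_\tau(x-y)|\,dx\,d\mu(y)
\]
by splitting the $y$-integral into $y\in 100T$ and $y\notin100T$.

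\emph{Near part.} For $y\in100T$, we use $|\eta_T|\lesssim1$ and $\|\phi_\tau\|_1\lesssim1$. This gives a contribution $\lesssim\mu(100T)$, which is even better than the claimed $2^{C\delta j}\mu(100T)$.

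\emph{Far part.} For $y\notin 100T$ and $x$, we split according to whether $x\in 50T$. If $x\in50T$, then $x-y\notin 2^{\delta j}\cdot 40\tau^*$, and the tail bound gives
\[
    \int_{x\notin\ldots}|\phi_\tau(x-y)|\,dx\lesssim_N 2^{-N\delta j}.
\]
If $x\notin 50T$, then $|\eta_T(x)|\lesssim_N 2^{-N\delta j}$. If $\eta_T$ is built as a compactly supported bump, this second case is even vacuous; with the stated Fourier support condition, $\eta_T$ is Schwartz-adapted to $T$, so we use the decay instead. Integrating against $\mu$ then bounds the far part by $C_N2^{-N\delta j}\mu(\mathbb R^n)$. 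Since $\delta$ is fixed and $N$ is arbitrary, this is $\operatorname{RapDec}(2^j)\mu(\mathbb R^n)$, proving \eqref{eq:L1-localization}.

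\emph{Summed statement.} For the second claim we need decay that is summable over $T$. We refine the far-part bound by tracking the distance. If $100T\cap B^n(0,1)=\varnothing$ and $y\in\operatorname{supp}\mu\subset B^n(0,1)$, then the $\tau^*$-gauge distance between $y$ and $T$ is at least $\gtrsim 2^{\delta j}(1+\operatorname{dist}_{\tau^*}(T,B^n(0,1)))$. The product of the tails of $\eta_T$ and $\phi_\tau$ then gives a bound
\[
    \|M_{T,\theta}\mu\|_1\lesssim_N 2^{-N\delta j}\bigl(1+d_T\bigr)^{-N}\mu(\mathbb R^n),
\]
where $d_T$ is the number of $2^{\delta j}\tau^*$-steps separating $T$ from $B^n(0,1)$. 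The translates $T$ form a finitely overlapping lattice-like family, so the number of $T$ with $d_T\sim d$ is $\lesssim (d+2^{Cj})^{n}$. Here $2^{Cj}$ bounds the number of tubes meeting $B^n(0,1)$ at all, since the side lengths of $\tau^*$ lie between $2^{-j}$ and $1$. Summing over $d$ with $N$ large yields $2^{Cj}2^{-N\delta j}\mu(\mathbb R^n)$, which is again $\operatorname{RapDec}(2^j)\mu(\mathbb R^n)$.

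The main obstacle is this summation. The decay of $\phi_\tau$ must be taken anisotropically in $\tau^*$-coordinates, and the counting of tubes must be uniform in $\theta$ and in the output $\bbk$. Uniformity follows because every quantity is expressed in the orthonormal Frenet frame at the net point $\theta_{\bbk}$, with side lengths $2^{-a_r}$ and $1$ that are bounded below by $2^{-j}$.
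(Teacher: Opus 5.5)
Your proof is correct and is essentially the standard near/far splitting behind \cite[Lemma~4]{gan2022restricted}, which the paper cites for the first claim. For the second claim, your distance-weighted per-packet bound summed over the finitely overlapping family $\mathbb T_\tau$ is the tail summation the paper indicates. One point needs stating more carefully: the bound $|\eta_T(x)|\lesssim_N 2^{-N\delta j}$ off $50T$, and its distance-dependent analogue, require $\eta_T$ to make its transition at the scale of $\tau^*$, not $2^{\delta j}\tau^*$. This holds, for example, for $\eta_T=\mathbf 1_{T_0}*\varphi$ with $T_0$ a tile of a tiling by translates of $2^{\delta j}\tau^*$ and $\widehat\varphi$ supported in $\tfrac1{100}(\tau-\tau)$; decay merely at the scale of $T$, as your wording suggests, would give only $50^{-N}$ there.
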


\begin{proof}
The first assertion is the standard wave packet localization estimate;
see \cite[Lemma~4]{gan2022restricted}.  The second follows by summing the rapidly
decaying tails of the convolution kernel \(\check\psi_{\tau,\theta}\)
over the finitely overlapping family \(T\in\mathbb T_\tau\).  The
estimates are uniform in \(\theta\) because the output boxes at nearby
parameters are uniformly comparable.
\end{proof}

Consequently, we omit below all packets satisfying
$100T\cap B^n(0,1)=\varnothing$.

\subsection{The good--bad decomposition}

We now classify the wave packets according to the $\mu$-mass of the
associated enlarged spatial planks $100T$.

Fix a small parameter $\delta>0$.  For an output $\bbk$, define
\begin{equation}\label{eq:A-bbk}
    A_{\bbk}
    :=
    \sum_{i=1}^J
    \frac{(m-m_i)(m+1-m_i)}{2}\,k_i.
\end{equation}
By the definition \eqref{eq:a-r} of the side-length exponents,
\begin{equation}\label{eq:tau-volume}
\begin{split}
    \sum_{r=1}^m a_r
    &=mj-
      \sum_{i=1}^J\sum_{r=m_i+1}^m(r-m_i)k_i\\
    &=mj-A_{\bbk},
    \qquad
    |\tau|\sim2^{mj-A_{\bbk}}
\end{split}
\end{equation}
for every output box $\tau$ associated with $\bbk$.  Let $\tau^*$ be
the dual plank.  Then
\begin{equation}\label{eq:T-volume}
    |\tau^*|\sim2^{-mj+A_{\bbk}}.
\end{equation}

Let $C_0$ be sufficiently large in terms of $n,m$ and the fixed curve. Recall \eqref{eq:alpha-zero}.  Define
\begin{equation}\label{eq:beta}
    \beta_{j,\bbk}
    :=2^{-\alpha_0j+A_{\bbk}+C_0\epsilon j}.
\end{equation}
A plank $T\in\mathbb T_\tau$ is called \emph{bad} if
\begin{equation}\label{eq:bad-definition}
    \mu(100T)\geq\beta_{j,\bbk},
\end{equation}
and \emph{good} otherwise.  Denote the corresponding collections by $\mathbb T_{\tau,b}$ and $\mathbb T_{\tau,g}$.

Using the exact $\theta$-adapted decomposition
\eqref{eq:exact-wave-packet-decomp}, define
\begin{equation}\label{eq:bad-good-distributions}
\begin{split}
    g_{\theta,b}
    &:=\sum_{j\geq1}\sum_{\bbk}
    \sum_{\tau\in\mathcal T_{j,\bbk}(\theta)}
    \sum_{T\in\mathbb T_{\tau,b}}
    \pi_{\theta\#}M_{T,\theta}\mu,\\
    g_{\theta,g}
    &:=\sum_{j\geq1}\sum_{\bbk}
    \sum_{\tau\in\mathcal T_{j,\bbk}(\theta)}
    \sum_{T\in\mathbb T_{\tau,g}}
    \pi_{\theta\#}M_{T,\theta}\mu.
\end{split}
\end{equation}
We refer to $g_{\theta,b}$ and $g_{\theta,g}$ as the bad part and the
good part of $\pi_{\theta\#}\mu$, respectively.  Thus, in the sense of
distributions on $V_\theta$,
\begin{equation}\label{eq:measure-decomposition}
    \pi_{\theta\#}\mu=g_{\theta,0}+g_{\theta,b}+g_{\theta,g}.
\end{equation}
Theorem \ref{thm:measure} follows from the following propositions.

\begin{proposition}[Bad part]\label{prop:bad1}
If $0<\epsilon<1$ and $\delta\ll\epsilon$, then there exists $1<q<2$,
independent of $\mu$, such that
\begin{equation}\label{eq:bad-goal}
    \int_0^1
    \|g_{\theta,b}\|_{L^q(V_\theta)}^q\,d\theta
    \lesssim_{\alpha,\epsilon} \mu(\mathbb R^n).
\end{equation}
\end{proposition}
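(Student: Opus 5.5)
The plan is to estimate the bad part one dyadic scale at a time. We obtain a strong $L^1$ bound with geometric decay in $j$, together with a crude $L^2$ (or $L^\infty$) bound with polynomial growth in $2^j$. Interpolating with $q$ very close to $1$ then gives a summable series.

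For $j\geq1$ write
\[
g_{\theta,b,j}:=\sum_{\bbk}\sum_{\tau\in\mathcal T_{j,\bbk}(\theta)}\sum_{T\in\mathbb T_{\tau,b}}\pi_{\theta\#}M_{T,\theta}\mu .
\]
By Minkowski's inequality in $L^q(V_\theta)$, H\"older in $j$ with a weight $2^{\epsilon' j}$, and the bound $J\lesssim_n1$ (which makes the number of outputs $\bbk$ at scale $j$ at most $O(j^{C})$ by \eqref{eq:output-identities}), it suffices to prove
\[
\int_0^1\|g_{\theta,b,j}\|_{L^q}^q\,d\theta\lesssim 2^{-c\epsilon j}\mu(\mathbb R^n).
\]
The $L^2$ input is routine. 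By Lemma~\ref{lem:fourier-slice} and Plancherel on $V_\theta$, each $\pi_{\theta\#}M_{T,\theta}\mu$ has Fourier support in a dilate of $\tau\cap V_\theta$, and $\|\widehat{M_{T,\theta}\mu}\|_\infty\le\|M_{T,\theta}\mu\|_1$. Together with the bounded overlap of the boxes and the bound $\#\mathbb T_{\tau}\lesssim 2^{Cj}$ for packets meeting $B^n(0,1)$, this gives $\|g_{\theta,b,j}\|_{L^2}^2\lesssim 2^{Cj}\mu(\mathbb R^n)$ with $C=C(n,m)$. H\"older's inequality gives $\|f\|_q^q\le\|f\|_1^{2-q}\|f\|_2^{2(q-1)}$. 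So if we show
\[
\int_0^1\|g_{\theta,b,j}\|_{L^1}\,d\theta\lesssim 2^{-c\epsilon j}\mu(\mathbb R^n),
\]
then taking $q-1\ll \epsilon/C$ yields the desired decay after one more H\"older in $\theta$.

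For the $L^1$ bound we use $\|\pi_{\theta\#}\nu\|_1\le\|\nu\|_1$ and Lemma~\ref{lem:wave-packet-localization}. Up to $\operatorname{RapDec}(2^j)$ errors this reduces matters to
\[
2^{C\delta j}\sum_{\bbk}\int_0^1\sum_{\tau\in\mathcal T_{j,\bbk}(\theta)}\sum_{T\in\mathbb T_{\tau,b}}\mu(100T)\,d\theta .
\]
Since $\tau$ depends only on the net point $\theta_{\bbk}$, the $\theta$-integral becomes $\ell_{\bbk}\sum_{\theta_{\bbk}\in\Theta_{\bbk}}$. We then rewrite $\sum_T\mu(100T)=\int\#\{T\ni x\}\,d\mu(x)$. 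For each $x$ and each net point, only $O(2^{C\delta j})$ planks of the $O(1)$ boxes contain $x$. It therefore suffices to show, uniformly in $x\in\operatorname{supp}\mu$,
\[
\ell_{\bbk}\cdot\#\bigl\{\theta_{\bbk}\in\Theta_{\bbk}:\ x\in 100T\text{ for some bad }T\in\mathbb T_{\tau},\ \tau\in\mathcal T_{j,\bbk}(\theta_{\bbk})\bigr\}\lesssim 2^{-C'\epsilon j}.
\]
Here $C'$ is large compared with the $2^{C\delta j}$ losses and with the polynomial number of outputs. The threshold \eqref{eq:beta} is designed for exactly this: $\beta_{j,\bbk}\approx 2^{C_0\epsilon j}\,2^{-\alpha_0 j}|\tau^*|^{-1}2^{-mj}$ up to the volume normalization \eqref{eq:T-volume}.

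The main obstacle is this counting/measure estimate for the bad parameters. The approach we would try first is the following. Fix $x$ and let $T_{\theta'}(x)$ be the plank through $x$ for the net point $\theta'$. By Markov's inequality, the number of bad $\theta'$ is at most
\[
\beta_{j,\bbk}^{-1}\sum_{\theta'}\mu(100T_{\theta'}(x))=\beta_{j,\bbk}^{-1}\int\#\{\theta':y\in100T_{\theta'}(x)\}\,d\mu(y).
\]
For $y$ at distance $\rho$ from $x$, we would use the Frenet-frame variation \eqref{eq:frenet-variation} and the side lengths $2^{-a_r}$ (the $r\leq m$ directions) and $1$ (the others) of $T$ to count the $\ell_{\bbk}$-separated $\theta'$ for which $y-x$ stays in the plank. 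This is a sublevel-set estimate for $\theta'\mapsto(\langle y-x,e_r(\theta')\rangle)_{r\le m}$, controlled via non-degeneracy by a polynomial-type (Vandermonde) lower bound, in the spirit of the incidence estimates of \cite{gan2024restricted}. Integrating against $\mu$ dyadically in $\rho$ and using $c_\alpha(\mu)\le1$ with $\alpha>m>\alpha_0$ should produce a total of $\lesssim 2^{-\alpha_0 j+A_{\bbk}}\ell_{\bbk}^{-1}$ up to $2^{C\epsilon j}$. The margin $2^{C_0\epsilon j}$ in $\beta_{j,\bbk}$ then yields the required decay. Checking that the exponents $a_r$ and the identity $\sum_r a_r=mj-A_{\bbk}$ match the sublevel-set count uniformly over all outputs is where we expect the real work to lie.
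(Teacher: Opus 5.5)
\textbf{There is a genuine gap in the bad-parameter count.}

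Your outer reduction is sound and is the paper's. You get per-scale $L^1$ decay plus a crude bound growing like $2^{Cj}$ (you use $L^2$, the paper uses $L^\infty$), then interpolate with $q$ close to $1$. The count of $O(j^C)$ outputs and the passage via Lemma~\ref{lem:wave-packet-localization} to $2^{C\delta j}\int_0^1\sum_\tau\sum_{T\in\mathbb T_{\tau,b}}\mu(100T)\,d\theta$ also match.

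The gap is the step you flag, and it is not bookkeeping. The bound uniform in $x$ is false, and Markov plus sublevel sets plus Frostman cannot produce it. Take $n=m+1\geq3$ and the output $J=1$, $m_1=m$, $k_1=j$. Then $A_{\bbk}=0$, $\ell_{\bbk}=2^{-j}$, and $100T$ is essentially a $2^{-j}$-tube in direction $e_n(\theta_{\bbk})$. Your target becomes $\sum_{\theta'}\mu(100T_{\theta'}(x))\lesssim 2^{-(m-1-C\epsilon)j}$.

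\begin{itemize}
\item \emph{The sublevel-set gain is only one-dimensional.} For $|y-x|=r\geq2^{-j}$, the number of net points $\theta'$ with $y\in100T_{\theta'}(x)$ is $\sim r^{-1}$ when $y$ lies in the $2^{-j}$-neighbourhood of the two-dimensional cone $x+\{te_n(\theta)\}$, and zero otherwise. The gain is one-dimensional in $\theta$, not $m$-dimensional.
\item \emph{Frostman alone falls short.} It bounds your Markov sum only by $2^{-(\alpha-2)j}$, a loss of about $2^{(n-\alpha)j}$.
\item \emph{This loss is sharp.} Let $\mu$ be surface measure on a piece of that cone times an $(\alpha-2)$-dimensional Cantor measure on the normal fibres, plus a small Frostman piece at $x$. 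Every tube through the vertex $x$ then has mass $\gtrsim 2^{-(\alpha-1)j}$. This exceeds $\beta_{j,\bbk}=2^{-(m-(C_0+1)\epsilon)j}$ whenever $m<\alpha<n-(C_0+1)\epsilon$, so \emph{every} net point is bad at $x$.
\end{itemize}

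Structurally, Markov replaces the number of heavy planks by their total mass. The $\mu$-average of that mass is an $L^2$-energy of the projections at scale $2^{-j}$. That is Kaufman's argument: it works for $(n,m)=(2,1)$, but it is exactly what the good--bad decomposition is built to avoid for one-parameter families.

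\textbf{What is missing.} The decay in the bad part does not come from the threshold $\beta_{j,\bbk}$. It comes from a deep external input, the incidence estimate Proposition~\ref{prop:incidence} from Gan--Guo--Wang, which only controls the $\mu$-average over $x$. The paper's route is:
\begin{itemize}
\item It counts at most $2^{C\delta j}\mu(\mathbb R^n)\beta_{j,\bbk}^{-1}$ bad planks per $\tau$.
\item By Lemma~\ref{lem:wave-packet}, each projected plank is covered by $2^{A_{\bbk}+C\delta j}$ balls of radius $2^{-j}$. The factor $2^{A_{\bbk}}$ in \eqref{eq:beta} is there to cancel exactly this covering number. (Your formula has the sign of $A_{\bbk}$ reversed: in fact $\beta_{j,\bbk}=2^{(C_0\epsilon-\alpha_0)j}\,2^{mj}|\tau^*|$.)
\item Hence all projected bad planks at $\vartheta$ lie in a union $U_\vartheta$ of $\lesssim\mu(\mathbb R^n)2^{\alpha_0 j}$ balls, with $\alpha_0<m$.
\item Proposition~\ref{prop:incidence} then gives $2^{-j}\sum_\vartheta\pi_{\vartheta\#}\mu(U_\vartheta)\lesssim\mu(\mathbb R^n)2^{-\epsilon_0 j}$.
\end{itemize}
The $C_0\epsilon$ margin only absorbs the $2^{C\delta j}$ losses; it is not the source of decay.
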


\begin{proposition}[Good part]\label{prop:good1}
If
\begin{equation}
    \epsilon\ll\alpha-m
\end{equation}
and $\delta>0$ is sufficiently small in terms of $\epsilon$, then
\begin{equation}\label{eq:good-goal}
    \int_0^1
    \|g_{\theta,g}\|_{L^2(V_\theta)}^2\,d\theta
    \lesssim_{\alpha,\epsilon} \mu(\mathbb R^n).
\end{equation}
\end{proposition}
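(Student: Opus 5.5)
The plan is the $L^2$ argument of Guth--Iosevich--Ou--Wang and Gan et al., run separately for each output $\bbk$. The gain $2^{-(\alpha-m)j}$ is meant to come from two sources. The first is the Frostman bound \eqref{eq:frostman} with exponent $\alpha>m$. The second is the good-packet bound $\mu(100T)<\beta_{j,\bbk}$, which is only at level $\alpha_0=m-\epsilon$, together with the output intervals of length $\ell_{\bbk}$. I have not checked the exponent bookkeeping, and the key inequality in the third paragraph is only expected, not verified.

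\textbf{Step 1: reduce to one scale and one output.} By the bounded overlap of the cutoffs $\Phi_{\theta,j}$, of the outputs $\bbk$ (there are $O(j^{C})$ of them for each $j$), and of the $O_n(1)$ boxes in $\mathcal T_{j,\bbk}(\theta)$, Plancherel on $V_\theta$ reduces \eqref{eq:good-goal} to
\[
\sum_{j,\bbk}\int_0^1\sum_{\tau\in\mathcal T_{j,\bbk}(\theta)}\|\widehat{f_{\tau}}\|_{L^2(V_\theta)}^2\,d\theta,
\qquad
f_\tau:=\sum_{T\in\mathbb T_{\tau,g}}M_{T,\theta}\mu .
\]
Here Lemma~\ref{lem:fourier-slice} identifies $\widehat{\pi_{\theta\#}f_\tau}$ with $\widehat{f_\tau}|_{V_\theta}$.

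\textbf{Step 2: pass from slices of $\tau$ to all of $\tau$.} The functions $f_\tau$ are essentially supported in $B(0,2)$, up to $\operatorname{RapDec}$ tails by Lemma~\ref{lem:wave-packet-localization}. So $|\widehat{f_\tau}|$ is locally constant at unit scale. The box $\tau$ has width $\lesssim1$ in the directions $e_{m+1},\dots,e_n$, so
\[
\|\widehat{f_\tau}\|^2_{L^2(V_\theta)}\lesssim\|\widehat{f_\tau}\|^2_{L^2(\mathbb R^n)}=\|f_\tau\|_2^2 .
\]
The box $\tau$ is constant on each output interval, which has length $\sim\ell_{\bbk}$. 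Hence the $(j,\bbk)$ term is bounded by
\[
\ell_{\bbk}\sum_{\theta'\in\Theta_{\bbk}}\|f_{\tau_{\theta'}}\|_2^2\;\lesssim\;\ell_{\bbk}\,\Bigl\|\sum_{\theta'}f_{\tau_{\theta'}}\Bigr\|_2^2 .
\]
The last inequality uses the bounded overlap of the net-indexed boxes for fixed $(j,\bbk)$. Write $F:=\sum_{\theta'}f_{\tau_{\theta'}}$.

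\textbf{Step 3: bound $\|F\|_2^2$ by duality with $\mu$.} Write $\|F\|_2^2=\sum_{\tau}\langle f_\tau,F_\tau\rangle$, where $F_\tau$ is a frequency-localized piece of $F$. Each $f_\tau$ is $\eta_T(\mu*\check\psi_\tau)$, so moving the convolution onto $F$ bounds this by $2^{C\delta j}\int |\widetilde F|\,d\mu$. Here $\widetilde F$ is a sum of $\tau$-localized pieces $\widetilde F_\tau$, each essentially constant on the good planks $T$. Hölder gives
\[
\int|\widetilde F|\,d\mu\le\|\widetilde F\|_{L^p(\mu)}\,\mu(\mathbb R^n)^{1/p'}
\]
for some $p>2$. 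Because of local constancy, $\|\widetilde F\|_{L^p(\mu)}$ becomes a sum over planks weighted by $\mu(100T)$. That sum is controlled in two ways:
\begin{itemize}
\item in $L^\infty$, by the good condition, since $\|M_{T,\theta}\mu\|_\infty\lesssim2^{C\delta j}|\tau|\,\beta_{j,\bbk}$ and $|\tau|\beta_{j,\bbk}=2^{(m-\alpha_0)j+C_0\epsilon j}$ by \eqref{eq:tau-volume}, \eqref{eq:beta};
\item in $L^2$, by $\sum_T\mu(100T)\lesssim\mu(\mathbb R^n)$ and Frostman.
\end{itemize}

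\textbf{Step 4 (main obstacle): decouple over $\theta'$ and close the exponents.} For $p>2$ I need
\[
\|\widetilde F\|_{L^p}\lesssim 2^{\epsilon j}\Bigl(\sum_{\theta'}\|\widetilde F_{\tau_{\theta'}}\|_p^2\Bigr)^{1/2}.
\]
For fixed $\bbk$, the boxes $\tau_{\theta'}$ are $\ell_{\bbk}$-pieces of a neighbourhood of a lower-dimensional cone generated by $V_\theta$. After the anisotropic rescaling built into Proposition~\ref{prop:Vtheta-decomposition}, they should be pieces of a non-degenerate curve's cone. I would try to get this by the iterated decoupling of \cite{gan2024restricted} (Bourgain--Demeter--Guth for non-degenerate curves and cones), using \eqref{eq:output-identities} to check that each iteration stage is in rescaled non-degenerate position. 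Once this holds, interpolating between the $L^2$ and $L^\infty$ plank estimates gives a bound of the form
\[
\ell_{\bbk}\|F\|_2^2\lesssim 2^{C\epsilon j}\,2^{-c(\alpha-m)j}\mu(\mathbb R^n).
\]
Here $\ell_{\bbk}$ compensates exactly for the $A_{\bbk}$ in $\beta_{j,\bbk}$; I expect this from $K_{\bbk}$, $A_{\bbk}$ and \eqref{eq:output-identities}, but I have not verified it. With $\epsilon\ll\alpha-m$ and $\delta\ll\epsilon$, this is summable over $j$ and over the $O(j^C)$ outputs, which gives \eqref{eq:good-goal}.

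The decoupling step, and tracking the multiscale output geometry in it, is where I expect the real work.
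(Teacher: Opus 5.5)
There is a genuine gap, and it is the inequality you flag in Step~4. An $\ell^2$ decoupling over the net boxes with only a $2^{\epsilon j}$ loss is false for every $p>2$. Already for $n=2$, $m=1$ the only output is $J=1$, $m_1=1$, $n_1=2$, $k_1=j$. The net boxes are then the $\sim2^j$ rectangles of size $2^j\times1$ with long side along $e_1(\theta')$, i.e.\ the $2^{-j}$-angular sectors of a planar annulus. Taking $F_\tau=\check\psi_\tau$ for all $\tau$ (all peaks at the origin) gives $\|\sum F_\tau\|_p\gtrsim 2^{j(\frac12-\frac1p)}(\sum\|F_\tau\|_p^2)^{1/2}$.

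What is true is Proposition~\ref{prop:output-decoupling}. There $p=n(n+1)$ is at or above every stage's critical exponent $(n_i-m_i)(n_i+1-m_i)$, and the estimate carries the power loss $2^{\sum_i(\frac12-\frac{(n_i-m_i)(n_i+1-m_i)}{2p})k_i}$. The proof does not avoid this loss; it balances it exactly, and that balance is where the decay comes from. In the squared estimate, the $2^{K_{\bbk}}$ part of the loss cancels against $\ell_{\bbk}^{2-\frac2p}$ and against the factor $2^{(1-\frac2p)K_{\bbk}}$ that comes from $\#\mathcal T^{\mathrm{net}}_{j,\bbk}\lesssim2^{K_{\bbk}}$ and H\"older from $\ell^2$ to $\ell^p$. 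The remaining factor $2^{-\frac1p\sum_i(n_i-m_i)(n_i+1-m_i)k_i}$ equals $2^{-\frac2p((n-m)j+A_{\bbk})+O(1)}$; this follows by summing \eqref{eq:output-identities} over $q=m+1,\dots,n$. It beats the Frostman factor $2^{\frac2p(n-\alpha)j}$ by exactly $2^{-\frac2p(\alpha-m)j}$. Your expectation that $\ell_{\bbk}$ absorbs the $A_{\bbk}$ in $\beta_{j,\bbk}$ is also not what happens: $A_{\bbk}$ cancels between $|\tau|\sim2^{mj-A_{\bbk}}$ and $\beta_{j,\bbk}$.

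Even with the correct decoupling, Step~3 cannot close as written. The H\"older step $\int|\widetilde F|\,d\mu\le\|\widetilde F\|_{L^p(\mu)}\mu(\mathbb R^n)^{1/p'}$ keeps only the total mass of $\mu$ in the dual factor, so the good condition enters only once. If you run the remaining steps with this dual factor, $\beta_{j,\bbk}$ appears only to the power $1-\frac2p$ against $|\tau|^{2-\frac4p}$, which leaves an uncompensated factor $|\tau|^{1-\frac2p}$.

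The paper uses the good condition twice, through a multiplicity pigeonholing:
\begin{enumerate}
\item It writes $\|M_T\mu\|_2^2=\int f_T\,d\mu$ with $f_T$ as in \eqref{eq:f-T}.
\item It selects an energy-capturing set $Y$ of $2^{-j}$-balls, each meeting $\sim M$ selected planks.
\item It applies H\"older in Lebesgue measure on $Y$ against $\mu*|\rho_j|$, and controls $\|\mu*|\rho_j|\|_{L^{p'}(Y)}$ by interpolating $\|\mu*|\rho_j|\|_\infty\lesssim2^{j(n-\alpha)}$ with $\int_Y\mu*|\rho_j|\lesssim|\mathbb W|\beta_{j,\bbk}/M$.
\item Bernstein and energy capture turn this into a bootstrap inequality.
\item That inequality is multiplied by the $(1-\frac2p)$-th power of the $L^\infty$ bound $\int_Y|\sum f_T|\,d\mu\lesssim2^{C\delta j}M|\tau|\beta_{j,\bbk}\mu(\mathbb R^n)$.
\end{enumerate}
As a result $M$ and $|\mathbb W|$ cancel, and $(|\tau|\beta_{j,\bbk})^{2-\frac4p}=2^{O(\epsilon j)}$. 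Your plan has no substitute for this mechanism.

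Working with $\int f_T\,d\mu$ also removes your detour through $\|F\|_2^2$. That detour used $\sum_\tau\|f_\tau\|_2^2\lesssim\|\sum_\tau f_\tau\|_2^2$, which is the wrong direction for boxes that merely have bounded overlap. Here neighbouring net boxes essentially coincide by Lemma~\ref{lem:nearby-output-boxes}, so the inequality is not available.
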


\begin{proof}[Completion of the proof of Theorem~\ref{thm:measure}]
Choose $\epsilon>0$ sufficiently small in terms of $\alpha-m$, and
then choose $\delta>0$ sufficiently small in terms of $\epsilon$. Let
$q\in(1,2)$ be given by Proposition~\ref{prop:bad1}. Propositions
\ref{prop:bad1} and \ref{prop:good1} imply that, for almost every
$\theta$,
\[
    g_{\theta,b}\in L^q(V_\theta),
    \qquad
    g_{\theta,g}\in L^2(V_\theta).
\]
The Fourier support of $g_{\theta,0}$ lies in a fixed ball. Since
$|\widehat\mu|\leq\mu(\mathbb R^n)$, Plancherel's theorem and
\eqref{eq:normalization} give
\[
    \int_0^1
    \|g_{\theta,0}\|_{L^2(V_\theta)}^2\,d\theta
    \lesssim
    \mu(\mathbb R^n)^2
    \leq
    \mu(\mathbb R^n).
\]
Consequently, the right-hand side of \eqref{eq:measure-decomposition}
is locally integrable for almost every $\theta$. Since
\eqref{eq:measure-decomposition} holds in the sense of distributions,
$\pi_{\theta\#}\mu$ is absolutely continuous. 

Since $1<q<2$, \eqref{eq:measure-decomposition} gives pointwise
\[
    (\pi_{\theta\#}\mu)^q
    \lesssim
    \pi_{\theta\#}\mu
    +|g_{\theta,0}+g_{\theta,g}|^2
    +|g_{\theta,b}|^q.
\]
Indeed, on the set where $\pi_{\theta\#}\mu\leq1$ the first term on the
right suffices; on its complement, either
$|g_{\theta,0}+g_{\theta,g}|\geq(\pi_{\theta\#}\mu)/2$ or
$|g_{\theta,b}|\geq(\pi_{\theta\#}\mu)/2$.
Therefore,
\[
\begin{split}
    \int_0^1
    \|\pi_{\theta\#}\mu\|_{L^q(V_\theta)}^q\,d\theta
    \lesssim{}&
    \mu(\mathbb R^n)
    +\int_0^1\|g_{\theta,0}\|_{L^2(V_\theta)}^2\,d\theta\\
    &+\int_0^1\|g_{\theta,g}\|_{L^2(V_\theta)}^2\,d\theta
    +\int_0^1\|g_{\theta,b}\|_{L^q(V_\theta)}^q\,d\theta\\
    \lesssim_\alpha{}&
    \mu(\mathbb R^n).
\end{split}
\]
This proves \eqref{eq:averaged-Lq} and completes the proof.
\end{proof}

Together with the reduction in the introduction, this proves
Theorem~\ref{thm:main}.

\section{The bad part}

In this section we prove Proposition~\ref{prop:bad1}.  The argument
combines a geometric covering estimate for projected wave packets with
the incidence estimate.  The badness condition controls the number of
packets associated with each output box, and the covering estimate
controls the number of $2^{-j}$-balls contributed by each packet.
After discretizing the parameter $\theta$, we apply the incidence
estimate to obtain decay in the annular scale $j$.  The wave packet
localization estimate gives an averaged $L^1$ estimate at each scale.
Interpolation with a trivial $L^\infty$ estimate then gives the required
$L^q$ bound. Recall Proposition \ref{prop:bad1}.

\begin{proposition}[Bad part]\label{prop:bad}
If $0<\epsilon<1$ and $\delta\ll\epsilon$, then there exists $1<q<2$,
independent of $\mu$, such that
\begin{equation}
    \int_0^1
    \|g_{\theta,b}\|_{L^q(V_\theta)}^q\,d\theta
    \lesssim_{\alpha,\epsilon} \mu(\mathbb R^n).
\end{equation}
\end{proposition}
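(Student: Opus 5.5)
We prove the bound scale by scale. Write $g_{\theta,b}=\sum_{j\ge1}g_{\theta,b,j}$, where $g_{\theta,b,j}$ collects the packets at scale $j$. For each $j$ we aim for an $L^1$ bound with no loss and an $L^\infty$-type bound with a gain. Interpolating then gives an $L^q$ bound that decays like $2^{-cj}$, and we sum in $j$.

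\emph{Step 1: $L^1$ bound.} By Lemma~\ref{lem:wave-packet-localization}, $\|\pi_{\theta\#}M_{T,\theta}\mu\|_1\le\|M_{T,\theta}\mu\|_1\lesssim2^{C\delta j}\mu(100T)+\mathrm{RapDec}(2^j)$. The planks $T\in\mathbb T_\tau$ overlap boundedly, and for fixed $j$ there are only $O(j^{C})$ outputs $\bbk$. The overlap of the boxes $\tau$ is controlled by the number of output boxes per output. Hence
\[
\int_0^1\|g_{\theta,b,j}\|_1\,d\theta\lesssim2^{C\delta j}j^C\mu(\mathbb R^n).
\]
\emph{Step 2: $L^\infty$ bound.} Each packet satisfies $|\pi_{\theta\#}M_{T,\theta}\mu|\lesssim\|M_{T,\theta}\mu\|_1\,|\pi_\theta(T)|^{-1}$. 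It is essentially constant on the projected plank $\pi_\theta(T)\subset V_\theta$, which has volume comparable to $|\tau^*|$ restricted to $V_\theta$, namely $2^{-mj+A_{\bbk}}$. Moreover $\|M_{T,\theta}\mu\|_1\lesssim2^{C\delta j}\mu(100T)$. This already gives $\|g_{\theta,b,j}\|_\infty\lesssim 2^{Cj}$, which is too weak by itself.

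\emph{Step 3: the main obstacle.} Instead of a sup bound, we bound $\|g_{\theta,b,j}\|_{L^2}^2$ or the measure of the set where $|g_{\theta,b,j}|$ is large. Interpolating with Step~1 requires the averaged estimate
\[
\int_0^1\|g_{\theta,b,j}\|_1^{\,}\,d\theta\lesssim2^{-cj}
\]
to hold after restricting to large values, which amounts to showing that bad planks are rare. The plan is as follows. Fix an output $\bbk$ and a net point $\theta_{\bbk}$. The badness condition $\mu(100T)\ge\beta_{j,\bbk}$, together with disjointness, gives $\#\mathbb T_{\tau,b}\lesssim\beta_{j,\bbk}^{-1}$.

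Next comes the key incidence estimate: the set of $\theta$ for which a fixed ball $B(x,2^{-j})$ lies in many bad planks has small measure. We would prove this by invoking the Frostman condition $c_\alpha(\mu)\le1$ with $\alpha>m$. Since $\beta_{j,\bbk}=2^{-\alpha_0 j+A_{\bbk}+C_0\epsilon j}$ and $|\tau^*|=2^{-mj+A_{\bbk}}$, a bad plank carries mass at least $2^{(m-\alpha_0+C_0\epsilon)j}=2^{(C_0+1)\epsilon j}$ times its "expected" share.

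The argument is a double count. Each $x$ in the support has $\mu$-mass in $100T$ spread over directions $\theta$. Planks for $\theta$'s separated by $\ell_{\bbk}$ are transverse in the directions $e_{m+1},\dots,e_n$, because of the Frenet variation \eqref{eq:frenet-variation}. The Frostman bound then limits how many $\theta$-intervals can have bad planks through a typical point. Concretely, we would show
\[
\int_0^1\sum_{\tau}\sum_{T\in\mathbb T_{\tau,b}}\mu(100T)\,d\theta\lesssim2^{-c\epsilon j}\mu(\mathbb R^n),
\]
using $\mu(100T)\le\mu(100T)^{1-s}\,\mu(100T)^s$ and a bound of the form $\int\#\{T\ni x\text{ bad}\}\,d\theta\lesssim 2^{-c\epsilon j}$, where the $C_0\epsilon$ margin absorbs the $2^{C\delta j}$ and $2^{C\epsilon j}$ losses from decoupling and pigeonholing.

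Combining this with Step~1 gives $\int\|g_{\theta,b,j}\|_1\,d\theta\lesssim2^{-c\epsilon j}\mu(\mathbb R^n)$. Hölder interpolation against the crude bound $\|g_{\theta,b,j}\|_\infty\lesssim2^{Cj}$ then yields
\[
\int\|g_{\theta,b,j}\|_q^q\,d\theta\lesssim2^{(q-1)Cj-c\epsilon j}\mu(\mathbb R^n).
\]
Choosing $q-1\ll\epsilon$ makes this summable in $j$, which gives \eqref{eq:bad-goal}. The transversality and incidence count in Step~3 is the step we expect to be hardest.
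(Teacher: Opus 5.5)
Your outer frame matches the paper's. You prove, for each $j$ and output $\bbk$, an averaged $L^1$ decay $\int_0^1\|\mathcal B_{j,\bbk}(\theta)\|_{L^1(V_\theta)}\,d\theta\lesssim 2^{-cj}\mu(\mathbb R^n)$, interpolate with the crude bound $2^{Cj}\mu(\mathbb R^n)$ in $L^\infty$, take $q$ close to $1$, and sum over the $O(j^C)$ outputs and over $j$.

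The gap is that this $L^1$ decay is the entire content of the proposition, and your Step~3 does not establish it. The double count you sketch is a Kaufman-type transversality argument: Frostman with $\alpha>m$, separation of planks at parameters $\ell_{\bbk}$ apart via \eqref{eq:frenet-variation}, and the splitting $\mu(100T)=\mu(100T)^{1-s}\mu(100T)^s$, which is vacuous. Such arguments do not work for restricted families. The Frenet variation only gives degenerate, higher-order separation $|\theta-\theta'|^{|r-r'|}$, and energy or sublevel-set arguments built on it give only partial dimension bounds, well short of what is needed when $\dim E>m$. The estimate you want, $\int\!\int_0^1\#\{T\ni x\text{ bad}\}\,d\theta\,d\mu(x)\lesssim 2^{-c\epsilon j}\mu(\mathbb R^n)$, is an averaged non-concentration statement for $\pi_{\theta\#}\mu$. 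It has the strength of the sharp restricted projection theorem $\dim_{\mathrm H}\pi_\theta(E)=\min\{m,\dim_{\mathrm H}E\}$. Your heuristic comparing $\mu(100T)$ with the volume of $T$ (the ``expected share'') carries no information, since $\mu$ is singular.

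You are missing two ideas. The first is a covering reduction. By Lemma~\ref{lem:wave-packet}, each $\pi_\theta(100T)$ is covered by $\lesssim 2^{A_{\bbk}+C\delta j}$ balls of radius $2^{-j}$ in $V_\theta$. Badness gives $\#\mathbb T_{\tau,b}\lesssim \mu(\mathbb R^n)2^{\alpha_0 j-A_{\bbk}-(C_0\epsilon-C\delta)j}$. The factors $2^{A_{\bbk}}$ cancel, which is exactly what the choice of $\beta_{j,\bbk}$ is designed for, and $C_0\epsilon$ absorbs the $\delta$-losses. Hence, for each $\theta$, the projections of all bad planks lie in a set $U_\theta\subset V_\theta$ built from $\lesssim\mu(\mathbb R^n)2^{\alpha_0 j}$ balls of radius $2^{-j}$, and $\|\mathcal B_{j,\bbk}(\theta)\|_{L^1}\lesssim 2^{C\delta j}\pi_{\theta\#}\mu(U_\theta)+\operatorname{RapDec}(2^j)\mu(\mathbb R^n)$.

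The second is a deep external input. After discretizing $\theta$ to a $2^{-j}$-net, the paper applies the discretized restricted projection theorem of Gan--Guo--Wang (Proposition~\ref{prop:incidence}, i.e.\ \cite[(2.7)]{gan2024restricted}, whose proof rests on decoupling for non-degenerate curves). It uses $\alpha^*=\alpha_0=m-\epsilon<m<\alpha$, which yields $2^{-j}\sum_{\vartheta}\pi_{\vartheta\#}\mu(U_\vartheta)\lesssim\mu(\mathbb R^n)2^{-\epsilon_0 j}$. Without this input, or an argument of comparable strength, Step~3 remains unproved.

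Also, no decoupling enters the bad part. The margin $C_0\epsilon$ is used there only to keep the ball count below $2^{\alpha_0 j}$.
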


We first estimate how many \(2^{-j}\)-balls are needed to cover the
projection of a wave packet. Recall \eqref{eq:A-bbk}.

\begin{lemma}[Projected wave packets]\label{lem:wave-packet}
For $\tau\in\mathcal T_{j,\bbk}(\theta)$ and $T\in\mathbb T_\tau$, the
set $\pi_\theta(100T)$ can be covered by
\begin{equation}\label{eq:cover-number}
    \lesssim2^{A_{\bbk}+C\delta j}
\end{equation}
balls of radius $2^{-j}$ in $V_\theta$.
\end{lemma}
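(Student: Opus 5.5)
The plan is to describe $100T$ explicitly in the Frenet frame at the net point $\theta_{\bbk}$, re-express it in the Frenet frame at $\theta$, and check that its projection onto $V_\theta=\operatorname{span}\{e_1(\theta),\ldots,e_m(\theta)\}$ lies in a box of side $\lesssim 2^{C\delta j}2^{-a_{r'}}$ in each direction $e_{r'}(\theta)$, $1\leq r'\leq m$.

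\textbf{Setting up $T$.} Since $\tau$ is an output box of $\Omega^j_{\bbk,\theta_{\bbk}}$, it has side lengths $L_r=2^{a_r}$ along $e_r(\theta_{\bbk})$ for $r\leq m$, and $L_r=1$ for $r>m$. The dual plank $\tau^*$ is therefore the box with sides $L_r^{-1}$ along the same directions. Translations do not affect covering numbers, so we may assume $100T$ is centered at the origin. Then every $x\in100T$ has the form
\[
x=\sum_{r=1}^n\mu_re_r(\theta_{\bbk}),
\qquad |\mu_r|\lesssim2^{\delta j}L_r^{-1}.
\]
The $e_{r'}(\theta)$-coordinate of $\pi_\theta x$, for $r'\leq m$, is
\[
\sum_r\mu_r\langle e_r(\theta_{\bbk}),e_{r'}(\theta)\rangle .
\]

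\textbf{Controlling the coordinates.} By \eqref{eq:frenet-variation} and $\Delta:=|\theta-\theta_{\bbk}|\leq c_0\ell_{\bbk}=c_02^{-K_{\bbk}}$, the term indexed by $r$ is $\lesssim2^{\delta j}L_r^{-1}2^{-|r-r'|K_{\bbk}}$. We claim this is $\lesssim2^{\delta j}L_{r'}^{-1}$ in every case.
\begin{itemize}
\item If $r=r'$, the claim is trivial.
\item If $r>r'$, we use the inequality $L_{r'}\lesssim2^{(r-r')K_{\bbk}}L_r$ established in the proof of Lemma~\ref{lem:nearby-output-boxes}. It relies on \eqref{eq:a-r} together with \eqref{eq:output-identities} at $q=m+1$, and it covers $r>m$, where $L_r=1$.
\item If $r<r'\leq m$, then $a_r\geq a_{r'}$ because $a_r$ is nonincreasing in $r$ by \eqref{eq:a-r}. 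Hence $L_r^{-1}\leq L_{r'}^{-1}$, and the claim holds even without the factor $\Delta^{r'-r}$.
\end{itemize}
It follows that $\pi_\theta(100T)$ is contained in a box in $V_\theta$ with sides $\lesssim2^{C\delta j}2^{-a_{r'}}$ along $e_{r'}(\theta)$, $1\leq r'\leq m$.

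\textbf{Counting.} Every $a_{r'}\leq j$: from \eqref{eq:a-r}, $a_{r'}=j$ minus a nonnegative sum. So each side is at least $2^{-j}$ up to the $2^{C\delta j}$ factor, and the box is covered by
\[
\lesssim2^{Cm\delta j}\prod_{r'=1}^m2^{j-a_{r'}}=2^{C\delta j}2^{mj-\sum_{r'}a_{r'}}=2^{A_{\bbk}+C\delta j}
\]
balls of radius $2^{-j}$, by \eqref{eq:tau-volume}. The only delicate point is the off-diagonal mixing bound. It uses exactly the scale comparison $L_{r'}\lesssim 2^{(r-r')K_{\bbk}}L_r$ together with the monotonicity of the $a_r$, and both of these should be checked carefully against Proposition~\ref{prop:Vtheta-decomposition}, in particular for indices $r>m$.
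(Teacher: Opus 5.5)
Your proof is correct and is essentially the paper's argument. The paper invokes Lemma~\ref{lem:nearby-output-boxes} to say that the output boxes at $\theta_{\bbk}$ and at $\theta$ are comparable, hence so are their dual planks; it then projects the $\theta$-adapted plank onto $V_\theta$ and counts using \eqref{eq:tau-volume}. You instead run the same Frenet-mixing estimate \eqref{eq:frenet-variation} directly on the physical side for the plank based at $\theta_{\bbk}$, which makes the ``dual planks are comparable'' step explicit. The two facts you flag both hold: the $a_r$ are nonincreasing by \eqref{eq:a-r}, and the scale comparison follows from \eqref{eq:output-identities} at $q=m+1$, since every $n_i\geq m+1$, including the indices $r>m$ where $L_r=1$.
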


\begin{proof}
The output box at $\theta_{\bbk}$ and the corresponding box at
$\theta$ are contained in fixed dilations of one another.  Their dual planks
are therefore comparable.  Projecting onto $V_\theta$ gives side lengths
$2^{-a_1},\ldots,2^{-a_m}$, up to the factor $2^{C\delta j}$.  Hence the
number of $2^{-j}$-balls required is
\[
    \lesssim2^{C\delta j}\prod_{r=1}^m2^{j-a_r}
    =2^{A_{\bbk}+C\delta j}.
\]
The equality follows by \eqref{eq:tau-volume}.
\end{proof}

\begin{proof}[Proof of Proposition~\ref{prop:bad}]
Fix $j\geq1$ and an output $\bbk$, and write
\begin{equation}\label{eq:bad-j-k-piece}
    \mathcal B_{j,\bbk}(\theta)
    :=
    \sum_{\tau\in\mathcal T_{j,\bbk}(\theta)}
    \sum_{T\in\mathbb T_{\tau,b}}
    \pi_{\theta\#}M_{T,\theta}\mu.
\end{equation}
Thus the bad planks occurring in $\mathcal B_{j,\bbk}(\theta)$ are
those attached to the output boxes based at the nearest
$\ell_{\bbk}$-net point $\theta_{\bbk}$. Note that
\begin{equation}
    g_{\theta,b}=
    \sum_{j\geq1}\sum_{\bbk}
   \mathcal B_{j,\bbk}(\theta).
\end{equation}
By the triangle inequality, we have
\begin{equation}\label{08014.1}
     \int_0^1\|g_{\theta,b}\|_{L^1(V_\theta)}\,d\theta
    \leq
    \sum_{j\geq1}\sum_{\bbk}
    \int_0^1
    \|\mathcal B_{j,\bbk}(\theta)\|_{L^1(V_\theta)}\,d\theta.
\end{equation}
Our primary goal is to prove that, for some $\epsilon_0 > 0$ independent of $j$ and $\bbk$,
\begin{equation}\label{0814.}
\begin{split}
    \int_0^1
    \|\mathcal B_{j,\bbk}(\theta)\|_{L^1(V_\theta)}\,d\theta
    \lesssim
    \mu(\mathbb R^n)2^{-(\epsilon_0-C\delta)j}.
\end{split}
\end{equation}
We will later interpolate this estimate with a trivial $L^\infty$ bound.

To prove \eqref{0814.}, we first bound the number of projected bad planks.  For a fixed output
box $\tau$, the family $\{100T:T\in\mathbb T_\tau\}$ has overlap at most
$2^{C\delta j}$.  Since every bad plank satisfies
$\mu(100T)\geq\beta_{j,\bbk}$, it follows that
\begin{equation}\label{eq:number-bad}
\begin{split}
    \#\mathbb T_{\tau,b}
    &\lesssim
    2^{C\delta j}
    \frac{\mu(\mathbb R^n)}{\beta_{j,\bbk}}\\
    &\lesssim
    \mu(\mathbb R^n)
    2^{\alpha_0j-A_{\bbk}-(C_0\epsilon-C\delta)j}.
\end{split}
\end{equation}
By Lemma~\ref{lem:wave-packet}, each $\pi_\theta(100T)$ can be covered by
at most
\[
    \lesssim 2^{A_{\bbk}+C\delta j}
\]
balls of radius $2^{-j}$ in $V_\theta$. Together with the preceding bound on the number of bad planks, this shows that their projections can be covered by at
most
\begin{equation}\label{eq:number-bad-balls}
    \lesssim
    \mu(\mathbb R^n)
    2^{\alpha_0j-(C_0\epsilon-C\delta)j}
    \leq
    \mu(\mathbb R^n)2^{\alpha_0j}
\end{equation}
balls of radius $2^{-j}$. \medskip

We now discretize the parameter $\theta$.  The integral of a
nonnegative function on $[0,1]$ is the average of the corresponding
 sums over all translates of the $2^{-j}$-lattice.  Hence by pigeonholing we may
choose a translated $2^{-j}$-net
$\Lambda_{2^{-j}}\subset[0,1]$ such that
\begin{equation}\label{eq:bad-discretization}
    \int_0^1
    \|\mathcal B_{j,\bbk}(\theta)\|_{L^1(V_\theta)}\,d\theta
    \lesssim
    2^{-j}
    \sum_{\vartheta\in\Lambda_{2^{-j}}}
    \|\mathcal B_{j,\bbk}(\vartheta)\|_{L^1(V_\vartheta)}.
\end{equation}
Fix $\vartheta\in\Lambda_{2^{-j}}$.  The family
$\mathcal T_{j,\bbk}(\vartheta)$ consists of the output boxes based at
the nearby $\ell_{\bbk}$-net point $\vartheta_{\bbk}$.
Although these boxes are indexed by $\vartheta_{\bbk}$ rather
than by $\vartheta$ itself, this causes no difficulty.  Indeed,
\[
    |\vartheta-\vartheta_{\bbk}|\lesssim\ell_{\bbk},
\]
so the output boxes at $\vartheta$ and at $\vartheta_{\bbk}$ are
essentially the same.  Passing to dual planks, every
$T\in\mathbb T_\tau$ is contained in a fixed dilation of the corresponding
$\vartheta$-adapted plank, and conversely.

Let
\[
    \mathbb T_{\vartheta,b}
    :=
    \bigcup_{\tau\in\mathcal T_{j,\bbk}(\vartheta)}
    \mathbb T_{\tau,b}.
\]
We define $U_\vartheta\subset V_\vartheta$ to be a union of
$2^{-j}$-balls covering the projections of all these bad planks:
\[
    \bigcup_{T\in\mathbb T_{\vartheta,b}}
    \pi_\vartheta(100T)
    \subset U_\vartheta.
\]
By \eqref{eq:number-bad-balls}, the set $U_\vartheta$ can be built
from
\[
    \lesssim \mu(\mathbb R^n)2^{\alpha_0j}
\]
balls of radius $2^{-j}$. By a standard finite decomposition, it is enough to consider a
pairwise disjoint collection of at most
\(\lesssim\mu(\mathbb R^n)2^{\alpha_0j}\) balls of radius \(2^{-j}\). This places us in the setting of the following incidence estimate.

\begin{proposition}[Incidence estimate]\label{prop:incidence}
Let $0<\alpha<n$ and $0<\alpha^*<\min\{m,\alpha\}$.  Suppose that $\nu$
is supported on $B^n(0,1)$ and $c_\alpha(\nu)\leq1$.  Let $\rho>0$ and
let $\Lambda_\rho$ be a $\rho$-net in $[0,1]$.  For each
$\theta\in\Lambda_\rho$, let $\mathbb D_\theta$ be a disjoint collection
of at most
\[
    \nu(\mathbb R^n)\rho^{-\alpha^*}
\]
balls of radius $\rho$ in $V_\theta$.  Then there exists $\epsilon_0>0$
such that
\begin{equation}\label{eq:incidence}
    \rho\sum_{\theta\in\Lambda_\rho}
    \pi_{\theta\#}\nu\left(\bigcup_{D\in\mathbb D_\theta}D\right)
    \lesssim_{\alpha,\alpha^*}
    \nu(\mathbb R^n)\rho^{\epsilon_0}.
\end{equation}
\end{proposition}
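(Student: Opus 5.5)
The plan is to reduce \eqref{eq:incidence} to the discretized form of the restricted projection theorem of Gan, Guo and Wang \cite{gan2024restricted}. Their proof of $\dim_{\mathrm H}\pi_\theta(E)=\min\{m,\dim_{\mathrm H}E\}$ is, at the $\rho$-discretized level, an estimate of exactly this shape, with $\alpha^*$ strictly below $\min\{m,\alpha\}$. I would therefore not reprove any decoupling here. The work is to state the discretized estimate in the normalization used above and to check that the hypotheses match.

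First I would normalize. Write $M:=\nu(\mathbb R^n)\le 1$. If $M\le\rho^{\alpha^*}$, then $\mathbb D_\theta$ contains at most one ball. Each such ball has preimage $\pi_\theta^{-1}(D)$, which is a $\rho$-neighbourhood of an $(n-m)$-plane meeting $B^n(0,1)$. Covering it by $\lesssim\rho^{-(n-m)}$ balls of radius $\rho$ and using $c_\alpha(\nu)\le1$ gives $\nu(\pi_\theta^{-1}D)\lesssim\min\{M,\rho^{\alpha-(n-m)}\}$. This crude bound is not enough by itself, so I would simply include this case in the general argument below. In general, set $N:=\#\mathbb D_\theta\le M\rho^{-\alpha^*}$ and define the exceptional parameters
\[
\Theta_{\rm bad}:=\Bigl\{\theta\in\Lambda_\rho:\ \pi_{\theta\#}\nu\Bigl(\bigcup_{D\in\mathbb D_\theta}D\Bigr)\ge \rho^{\epsilon_0}M\Bigr\}.
\]
Since each term in \eqref{eq:incidence} is at most $M$, the left side is bounded by $\rho^{\epsilon_0}M+\rho\,\#\Theta_{\rm bad}\,M$. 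It therefore suffices to show $\#\Theta_{\rm bad}\lesssim\rho^{-1+\epsilon_0}$.

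The bound on $\#\Theta_{\rm bad}$ is a discretized exceptional-set estimate. For $\theta\in\Theta_{\rm bad}$, the set $F_\theta:=\pi_\theta^{-1}(\bigcup\mathbb D_\theta)$ is a union of at most $M\rho^{-\alpha^*}$ slabs of width $\rho$ and dimension $n-m$. Together these slabs carry a $\rho^{\epsilon_0}$-fraction of $\nu$. After dyadic pigeonholing in the $\nu$-mass of each slab, one obtains for every $\theta\in\Theta_{\rm bad}$ a family of $\rho$-slabs whose masses are comparable. This is the configuration ruled out by the $\delta$-discretized projection theorem in \cite{gan2024restricted}. That theorem says: for a $(\rho,\alpha)$-Frostman measure and a $(\rho,s)$-Frostman set of directions with $s>0$, $\pi_\theta$ cannot concentrate a $\rho^{\epsilon}$-fraction of mass on $\rho^{-\alpha^*}$ balls for $\alpha^*<\min\{m,\alpha\}$, except on a set of parameters of size $\rho^{-s}$ with $s<1$. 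Applying this with $\Theta_{\rm bad}$ in place of the direction set gives $\#\Theta_{\rm bad}\lesssim\rho^{-1+\epsilon_0}$ once $\epsilon_0$ is small in terms of $\min\{m,\alpha\}-\alpha^*$. Two preliminary reductions make this application legitimate. First, I would pass to a $(\rho,s)$-Frostman subset of $\Theta_{\rm bad}$, which is possible by a standard pigeonholing whenever $\#\Theta_{\rm bad}\ge\rho^{-1+\epsilon_0}$. Second, I would replace $\nu$ by $\nu/M$ to make it a probability measure; this only improves the Frostman constant by a factor $M^{-1}$, and the ball count $M\rho^{-\alpha^*}$ matches that rescaling.

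The main obstacle is this last matching step. The cited discretized theorem is usually stated for $\delta$-discretized sets rather than measures, with Frostman conditions on both $E$ and the parameter set. I need to check that the mass-normalized version, with the factor $M$ in the ball count, follows without loss. If the citation does not fit directly, the fallback is to redo the Gan--Guo--Wang argument in this setting. That means running their high--low / iterated decoupling $L^p$ bound for $\sum_\theta\|\pi_{\theta\#}\nu*\phi_\rho\|_{L^p}^p$ with $p$ slightly above $1$, where $\phi_\rho$ denotes the usual $\rho$-mollifier. Hölder on $\bigcup\mathbb D_\theta$, which has measure $\lesssim M\rho^{m-\alpha^*}$, then converts that $L^p$ bound into \eqref{eq:incidence}, and the gap $m-\alpha^*>0$ produces the power $\rho^{\epsilon_0}$.
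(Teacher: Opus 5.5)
Your top-level plan is the paper's, which gives no argument here. It quotes the proposition as the consequence \cite[(2.7)]{gan2024restricted} of Theorem~2.1 there, already in exactly this mass-normalized form: at most $\nu(\mathbb R^n)\rho^{-\alpha^*}$ balls, bound $\nu(\mathbb R^n)\rho^{\epsilon_0}$. The bridge you build to that citation, however, does not work.

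\textbf{The bad-parameter reduction is circular.} Reducing to $\#\Theta_{\mathrm{bad}}\lesssim\rho^{-1+\epsilon_0}$ is not a reduction. By Chebyshev it is equivalent, up to changing $\epsilon_0$, to \eqref{eq:incidence} itself. The result you then invoke is an exceptional-parameter bound for $(\rho,s)$-Frostman parameter sets. That is a stronger, exceptional-set type statement, and it is not the form being cited. What \cite[(2.7)]{gan2024restricted} records is already the averaged estimate over a full $\rho$-net. So passing to a Frostman subset of $\Theta_{\mathrm{bad}}$ is unnecessary if you quote (2.7), and unsupported if you don't.

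\textbf{The normalization step is backwards.} Since $M=\nu(\mathbb R^n)\le 1$, the measure $\nu/M$ has $c_\alpha(\nu/M)\le M^{-1}$, which is a worse constant, not a better one. When $M<\rho^{\alpha^*}$, every $\mathbb D_\theta$ is empty (not ``at most one ball''), so that case is trivial. In the remaining range $\rho^{\alpha^*}\le M\le 1$, though, $M^{-1}$ can be a genuine power of $\rho^{-1}$. An estimate with $\rho^{-\alpha^*}$ balls and conclusion $\rho^{\epsilon}$ therefore does not give $M\rho^{\epsilon_0}$ for small $M$ by rescaling. This is exactly why the normalized statement (2.7) is the one to quote. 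You flagged this as the main obstacle, but your proposed resolution of it is incorrect.

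\textbf{The fallback is not a proof either.} You do not identify where an $L^p$ bound for $\rho\sum_{\theta}\|\pi_{\theta\#}\nu*\phi_\rho\|_p^p$ would come from. For your H\"older step to produce decay, such a bound would need loss $\rho^{-\eta}$ with $\eta<(p-1)(\min\{m,\alpha\}-\alpha^*)$. In the regime where the paper applies the proposition ($\alpha>m$, $\alpha^*=\alpha_0=m-\epsilon$), this is an $\epsilon$-lossy discretized form of the averaged $L^q$ estimate \eqref{eq:averaged-Lq}. That is the very estimate the proposition is being used to prove, so the fallback assumes essentially the conclusion. The complete argument is simply the direct citation of \cite[(2.7)]{gan2024restricted}, whose normalization already matches the hypotheses here.
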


This is the consequence of Theorem~2.1 of \cite{gan2024restricted} recorded as
\cite[(2.7)]{gan2024restricted}.  Apply Proposition~\ref{prop:incidence} with
$\nu=\mu$, $\rho=2^{-j}$, and $\alpha^*=\alpha_0$ to each of the disjoint
subcollections above.  Summing the resulting estimates gives an exponent
$\epsilon_0>0$ such that
\begin{equation}\label{eq:bad-incidence}
    2^{-j}
    \sum_{\vartheta\in\Lambda_{2^{-j}}}
    \pi_{\vartheta\#}\mu(U_\vartheta)
    \lesssim
    \mu(\mathbb R^n)2^{-\epsilon_0j}.
\end{equation}

It remains to compare the bad wave packets with $U_\vartheta$.  By the
triangle inequality and \eqref{eq:L1-localization},
\begin{align}
\|\mathcal B_{j,\bbk}(\vartheta)\|_{L^1(V_\vartheta)}
&\leq
\sum_{\tau\in\mathcal T_{j,\bbk}(\vartheta)}
\sum_{T\in\mathbb T_{\tau,b}}
\|\pi_{\vartheta\#}M_{T,\vartheta}\mu\|_{L^1(V_\vartheta)}
\notag\\
&\lesssim
2^{C\delta j}
\sum_{\tau\in\mathcal T_{j,\bbk}(\vartheta)}
\sum_{T\in\mathbb T_{\tau,b}}
\mu(100T)
+\operatorname{RapDec}(2^j)\mu(\mathbb R^n).
\label{eq:bad-L1-localization}
\end{align}
Every plank in the last sum is contained in
$\pi_\vartheta^{-1}(U_\vartheta)$.  Since the families $\{100T\}$ have
overlap at most $2^{C\delta j}$, 
\[
\sum_{\tau\in\mathcal T_{j,\bbk}(\vartheta)}
\sum_{T\in\mathbb T_{\tau,b}}
\mu(100T)
\lesssim
2^{C\delta j}
\pi_{\vartheta\#}\mu(U_\vartheta).
\]
Consequently,
\begin{equation}\label{eq:bad-discrete-pointwise}
    \|\mathcal B_{j,\bbk}(\vartheta)\|_{L^1(V_\vartheta)}
    \lesssim
    2^{C\delta j}\pi_{\vartheta\#}\mu(U_\vartheta)
    +\operatorname{RapDec}(2^j)\mu(\mathbb R^n).
\end{equation}
Combining \eqref{eq:bad-discretization}, \eqref{eq:bad-incidence}, and
\eqref{eq:bad-discrete-pointwise}, we obtain
\begin{equation}\label{eq:bad-j-k}
\begin{split}
    \int_0^1
    \|\mathcal B_{j,\bbk}(\theta)\|_{L^1(V_\theta)}\,d\theta
    &\lesssim
    2^{C\delta j}2^{-j}
    \sum_{\vartheta\in\Lambda_{2^{-j}}}
    \pi_{\vartheta\#}\mu(U_\vartheta)
    +\operatorname{RapDec}(2^j)\mu(\mathbb R^n)\\
    &\lesssim
    \mu(\mathbb R^n)2^{-(\epsilon_0-C\delta)j}.
\end{split}
\end{equation}
Choose $\delta>0$ sufficiently small that $\epsilon_0-C\delta>0$.
Bounded overlap and Fourier inversion also give
\[
    \sup_{\theta\in[0,1]}
    \|\mathcal B_{j,\bbk}(\theta)\|_{L^\infty(V_\theta)}
    \lesssim 2^{Cj}\mu(\mathbb R^n).
\]
Hence, for $q>1$,
\[
    \int_0^1
    \|\mathcal B_{j,\bbk}(\theta)\|_{L^q(V_\theta)}^q\,d\theta
    \lesssim
    \mu(\mathbb R^n)^q
    2^{-(\epsilon_0-C\delta-C(q-1))j}.
\]
Choose $q\in(1,2)$ sufficiently close to $1$. By
\eqref{eq:normalization}, the preceding estimate gives
\[
    \int_0^1
    \|\mathcal B_{j,\bbk}(\theta)\|_{L^q(V_\theta)}^q\,d\theta
    \lesssim
    \mu(\mathbb R^n)2^{-cj}
\]
for some $c>0$. Since there are $O(j^C)$ outputs at scale $j$, 
\[
\begin{split}
    \left(
        \int_0^1
        \|g_{\theta,b}\|_{L^q(V_\theta)}^q\,d\theta
    \right)^{1/q}
    &\lesssim
    \mu(\mathbb R^n)^{1/q}
    \sum_{j\geq1}j^C2^{-cj/q}\\
    &\lesssim
    \mu(\mathbb R^n)^{1/q}.
\end{split}
\]
This proves Proposition~\ref{prop:bad}.
\end{proof}

\section{The good part}
In this section we prove Proposition~\ref{prop:good1}.  
Let us recall Proposition~\ref{prop:good1}.

\begin{proposition}[Good part]\label{prop:good}
If
\begin{equation}\label{eq:parameter-gap}
    \epsilon\ll\alpha-m
\end{equation}
and $\delta>0$ is sufficiently small in terms of $\epsilon$, then
\begin{equation}
    \int_0^1\|g_{\theta,g}\|_{L^2(V_\theta)}^2\,d\theta
    \lesssim_{\alpha,\epsilon}\mu(\mathbb R^n).
\end{equation}
\end{proposition}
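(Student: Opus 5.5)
The plan is to prove, for each dyadic scale $j\geq1$ and each output $\bbk$, the estimate
\[
\int_0^1\Bigl\|\sum_{\tau\in\mathcal T_{j,\bbk}(\theta)}\sum_{T\in\mathbb T_{\tau,g}}\pi_{\theta\#}M_{T,\theta}\mu\Bigr\|_{L^2(V_\theta)}^2\,d\theta
\lesssim 2^{-cj}\mu(\mathbb R^n),
\]
with some $c>0$ proportional to $\alpha-m$. Since there are only $O(j^C)$ outputs at scale $j$, this gives Proposition~\ref{prop:good} after summing in $j$ and $\bbk$, using the triangle inequality in $L^2(d\theta\,dV_\theta)$ and Cauchy--Schwarz over the polynomially many outputs. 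Cross terms between different $j$ are harmless, because the $\Phi_{\theta,j}$ have bounded overlap. All $2^{C\delta j}$ losses will be absorbed into $2^{C\epsilon j}$ by taking $\delta\ll\epsilon$, and at the end we will need $(C_0+C)\epsilon<\alpha-m$, which is exactly condition~\eqref{eq:parameter-gap}.

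\emph{Step 1 (Reduction to an $n$-dimensional $L^2$ norm).} Set $F_{\theta}:=\sum_{\tau}\sum_{T\in\mathbb T_{\tau,g}}M_{T,\theta}\mu$. By Lemma~\ref{lem:fourier-slice} and Plancherel on $V_\theta$, the quantity to bound is $\int_0^1\int_{V_\theta}|\widehat{F_\theta}|^2$. Each $F_\theta$ is essentially supported in $B^n(0,2)$ (Lemma~\ref{lem:wave-packet-localization}), so $\widehat{F_\theta}$ is locally constant at unit scale. Since the output boxes have unit thickness in the directions $e_{m+1},\dots,e_n$, we get $\int_{V_\theta}|\widehat{F_\theta}|^2\lesssim\int_{N_1(V_\theta)}|\widehat{F_\theta}|^2$. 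Next we discretize $\theta$ at the $\ell_{\bbk}$-net $\Theta_{\bbk}$, using Lemma~\ref{lem:nearby-output-boxes} to freeze the boxes on each output interval. Finally, we rewrite $\ell_{\bbk}\sum_{\theta'\in\Theta_{\bbk}}\int|\widehat{F_{\theta'}}|^2$ as $\int_{\mathbb R^n}|\widehat F(\xi)|^2 w(\xi)\,d\xi$. Here $F$ is the sum of the good packets over all net points, and $w(\xi)$ is $\ell_{\bbk}$ times the number of net points $\theta'$ with $\xi\in 2\tau(\theta')$. The bounded-overlap statement at the end of Section~2 lets us pass from the sum of squares to the square of the sum, up to $O(1)$ factors.

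\emph{Step 2 (Duality and the good condition).} Here the frequency pieces $\tau(\theta')$, $\theta'\in\Theta_{\bbk}$, play the role of caps of a conic/moment-type surface at scale $\ell_{\bbk}$. We write $\|F\|_2^2\lesssim\sum_{\tau}\|F_\tau\|_2^2$, plus a decoupling-type estimate for the overlapping part. We then bound each $\|F_\tau\|_2^2\leq\sum_T\|M_T\mu\|_\infty\|M_T\mu\|_1$, using $\|M_T\mu\|_\infty\lesssim|\tau|\,2^{C\delta j}\mu(100T)$. The good condition $\mu(100T)<\beta_{j,\bbk}$ together with~\eqref{eq:tau-volume} gives $|\tau|\beta_{j,\bbk}=2^{(m-\alpha_0)j+C_0\epsilon j}=2^{(1+C_0)\epsilon j}$. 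This loss is harmless only if it is compensated by the $\theta$-average: a fixed frequency $\xi$ at scale $2^j$ lies in $\tau(\theta')$ for only a small proportion of $\theta'$. To exploit this, we do not bound $\|M_T\mu\|_1$ by $\mu(100T)$ alone. Instead we pair $F$ against $\mu*|\check\psi|$ at the correct anisotropic scale and use the Frostman condition with exponent $\alpha>m$ on the dual planks $\tau^*$. This should gain $2^{-(\alpha-m)j}$ relative to the counting above.

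\emph{Main obstacle.} The hard step is the $\theta$-overlap/decoupling estimate in Step~2. It must show that, after summing over the $\sim\ell_{\bbk}^{-1}$ net points, the rectangles $\tau(\theta')$ overlap in a way that costs at most $2^{C\epsilon j}$. This should be quantified through~\eqref{eq:frenet-variation} and the identities~\eqref{eq:output-identities}, which fix how the side lengths $2^{a_r}$ compare with powers of $\ell_{\bbk}$. I would first try to invoke the iterated decoupling of \cite[Sections~3--5]{gan2024restricted} for the family $\{\tau(\theta')\}$ at the exponent where it is sharp, with an $\ell^2$ decoupling constant of size $2^{\eta j}$. I would then interpolate the resulting $L^p$ bound with the $L^\infty$ bound supplied by the good condition, as in \cite{gan2022restricted}. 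If the decoupling does not match the multiscale boxes directly, the fallback is to decouple one level of the iteration at a time, at each stage using the anisotropic rescaling from the appendix.
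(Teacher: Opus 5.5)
There is a genuine gap. The step you label the ``main obstacle'' is the entire content of the proof, and the mechanisms you suggest for it would not work.

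\emph{Why your proposed mechanisms fail.} With the good condition alone you get $\mathcal S_{j,\bbk}\lesssim 2^{C\delta j}|\tau|\beta_{j,\bbk}\mu(\mathbb R^n)=2^{((1+C_0)\epsilon+C\delta)j}\mu(\mathbb R^n)$. The $\theta$-average is already spent in this bound: the factor $\ell_{\bbk}$ only cancels the $\sim\ell_{\bbk}^{-1}$ net boxes. So the average is not a further source of gain. Nor does the Frostman condition on the anisotropic planks $\tau^*$ produce a $2^{-(\alpha-m)j}$ gain. That gain is what the plain $L^2$ energy argument gives for $(n,m)=(2,1)$, and it is not available in general. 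In the paper, Frostman enters only isotropically, through $\|\mu*|\rho_j|\|_\infty\lesssim2^{j(n-\alpha)}$, and there it is a \emph{loss}.

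Your decoupling plan is also off. The stage-$i$ curve factor has dimension $n_i-m_i$, so its critical exponent $(n_i-m_i)(n_i-m_i+1)$ varies with $i$. In general no single $p$ lets every stage decouple with only a $2^{\eta j}$ loss while still exploiting curvature. The paper works at $p=n(n+1)$, where the $\ell^2$ constant is the genuine power $2^{\sum_i(\frac12-\frac{(n_i-m_i)(n_i+1-m_i)}{2p})k_i}$, and this exact size must be tracked. Separately, in Step~1 bounded overlap only gives $\|\sum_{\theta'}F_{\theta'}\|_2^2\lesssim\sum_{\theta'}\|F_{\theta'}\|_2^2$. That is the wrong direction for your rewriting as $\int|\widehat F|^2w$; simply keep the sum of squares.

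\emph{The missing argument.} The paper proceeds as follows.
\begin{enumerate}
\item Linearize: $\|M_T\mu\|_2^2=\int f_T\,d\mu$, where $\widehat{f_T}$ is supported in $-2\tau$ and $\|f_T\|_\infty\lesssim2^{C\delta j}|\tau|\mu(100T)$.
\item Pigeonhole so that $\|f_T\|_p$ is essentially constant and every $2^{-j}$-ball of $Y$ meets $\sim M$ planks.
\item Bound $\int_Y|\sum f_T|\,d\mu$ in two ways. First, by $M|\tau|\beta_{j,\bbk}\mu(\mathbb R^n)$. Second, by H\"older at $p=n(n+1)$ together with the isotropic Frostman bound, the iterated decoupling and Bernstein; this bound carries a factor $M^{-(1-2/p)}$.
\item Take the geometric mean. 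This removes $M$, and all powers of $2^{K_{\bbk}}$ cancel: they come from $\ell_{\bbk}$, from the number of net boxes, and from the $\tfrac12$-part of the decoupling constant.
\item Sum \eqref{eq:output-identities} over $q=m+1,\dots,n$. This gives $\sum_i(n_i-m_i)(n_i+1-m_i)k_i=2(n-m)j+2A_{\bbk}+O(1)$. Hence the curvature part of the decoupling constant is exactly $2^{-(2(n-m)j+2A_{\bbk})/p}$. It beats the Frostman loss $2^{2(n-\alpha)j/p}$ uniformly in $\bbk$, and the leftover factor $2^{-2A_{\bbk}/p}\le1$ is harmless.
\end{enumerate}
The result is $\mathcal S_{j,\bbk}^{2-2/p}\lesssim2^{-\frac2p[(\alpha-m)-(p-2)\epsilon]j+C\epsilon j}\mu(\mathbb R^n)^{2-2/p}$. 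This is decay of rate about $(\alpha-m)/(p-1)$, not $\alpha-m$. Without this bookkeeping, and without the $M$-pigeonholing that lets the $L^\infty$ and $L^p$ bounds be combined, your outline does not give a bound that is summable in $j$ and uniform over outputs.
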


\begin{proof}[Proof of Proposition~\ref{prop:good}]
The proof is divided into three steps.
\medskip

\noindent\textbf{Step 1: Orthogonality and discretization in $\theta$.} Recall that we have defined $\mathcal T_{j,\bbk}(\theta)$ at the end of Section \ref{sec2}.
For $j\geq1$ and an output $\bbk$, 
\begin{equation}\label{eq:good-j-k-piece}
    \mathcal G_{j,\bbk}(\theta)
    :=
    \sum_{\tau\in\mathcal T_{j,\bbk}(\theta)}
    \sum_{T\in\mathbb T_{\tau,g}}
    \pi_{\theta\#}M_{T,\theta}\mu,
\end{equation}
and write
\begin{equation}\label{eq:S-j-k}
    \mathcal S_{j,\bbk}
    :=
    \int_0^1
    \|\mathcal G_{j,\bbk}(\theta)\|_{L^2(V_\theta)}^2\,d\theta.
\end{equation}
Note that
\begin{equation}
     g_{\theta,g}= \sum_j \sum_{\bbk}  \mathcal G_{j,\bbk}(\theta). 
\end{equation}

We first separate the sums over $j$ and $\bbk$.  For finite partial sums,
the Fourier slice theorem and the $m$-dimensional Plancherel theorem give
\begin{align}
&\int_0^1
\left\|
    \sum_{j,\bbk}\mathcal G_{j,\bbk}(\theta)
\right\|_{L^2(V_\theta)}^2\,d\theta
\notag\\
&\qquad=
\int_0^1\int_{V_\theta}
\left|
    \sum_{j,\bbk}
    \sum_{\tau\in\mathcal T_{j,\bbk}(\theta)}
    \sum_{T\in\mathbb T_{\tau,g}}
    \widehat{M_{T,\theta}\mu}(\xi)
\right|^2d\xi\,d\theta.
\label{eq:full-good-plancherel}
\end{align}
The frequency supports have bounded overlap in the annular index $j$.
For each fixed $j$, there are $O(j^{C})$ possible outputs $\bbk$;
therefore Cauchy--Schwarz in the $\bbk$-sum yields
\begin{equation}\label{eq:full-good-reduction}
\int_0^1
\left\|
    \sum_{j,\bbk}\mathcal G_{j,\bbk}(\theta)
\right\|_{L^2(V_\theta)}^2\,d\theta
\lesssim
\sum_{j\geq1}j^{C}\sum_{\bbk}\mathcal S_{j,\bbk}.
\end{equation}
Thus it remains to obtain a summable estimate for each
$\mathcal S_{j,\bbk}$.
\medskip

Fix $j\geq1$ and an output $\bbk$.  The $\ell_{\bbk}$-net
$\Theta_{\bbk}$ divides $[0,1]$ into output intervals $I$ of length
comparable to $\ell_{\bbk}$.  For $\theta\in I$, the nearest net point
$\theta_{\bbk}$ is fixed, and hence so is the collection of output boxes.
We denote this common collection by $\mathcal T_{j,\bbk}(I)$.  As $I$
ranges over the output intervals, let
\[
    \mathcal T^{\mathrm{net}}_{j,\bbk}
    :=
    \bigsqcup_I\mathcal T_{j,\bbk}(I)
\]
denote the resulting indexed family of output boxes. 

For a fixed \(\theta\), the Fourier slice theorem and the
\(m\)-dimensional Plancherel theorem give
\begin{align}
\|\mathcal G_{j,\bbk}(\theta)\|_{L^2(V_\theta)}^2
&=
\int_{V_\theta}
\left|
\sum_{\tau\in\mathcal T_{j,\bbk}(\theta)}
\sum_{T\in\mathbb T_{\tau,g}}
\widehat{M_{T,\theta}\mu}(\xi)
\right|^2\,d\xi.
\label{eq:good-plancherel-one}
\end{align}
For fixed \(\theta\), \(j\), and \(\bbk\), the collection
\(\mathcal T_{j,\bbk}(\theta)\) consists of \(O_n(1)\) rectangular sign
components. Hence, by Cauchy--Schwarz in the \(\tau\)-sum,
\begin{equation}\label{eq:tau-orthogonality}
\|\mathcal G_{j,\bbk}(\theta)\|_{L^2(V_\theta)}^2
\lesssim
\sum_{\tau\in\mathcal T_{j,\bbk}(\theta)}
\left\|
\sum_{T\in\mathbb T_{\tau,g}}
\pi_{\theta\#}M_{T,\theta}\mu
\right\|_{L^2(V_\theta)}^2.
\end{equation}

Fix one $\tau$.  The planks $T\in\mathbb T_\tau$ form a finitely
overlapping tiling.  Since $|\theta-\theta_{\bbk}|=O(\ell_{\bbk})$, the corresponding projected planks
$\pi_\theta(100T)$ have overlap at most $2^{C\delta j}$.  Applying
Cauchy--Schwarz, followed again by the $m$-dimensional
Plancherel theorem, yields
\begin{align}
\left\|
\sum_{T\in\mathbb T_{\tau,g}}
\pi_{\theta\#}M_{T,\theta}\mu
\right\|_{L^2(V_\theta)}^2
&\lesssim
2^{C\delta j}
\sum_{T\in\mathbb T_{\tau,g}}
\|\pi_{\theta\#}M_{T,\theta}\mu\|_{L^2(V_\theta)}^2
\notag\\
&=
2^{C\delta j}
\sum_{T\in\mathbb T_{\tau,g}}
\int_{V_\theta}|\widehat{M_{T,\theta}\mu}(\xi)|^2\,d\xi
\notag\\
&\lesssim
2^{C\delta j}
\sum_{T\in\mathbb T_{\tau,g}}
\|M_{T,\theta}\mu\|_2^2.
\label{eq:T-orthogonality}
\end{align}
In the last step we used the usual uncertainty-principle estimate in the
directions perpendicular to $V_\theta$; the enlargement of the packets
contributes only the displayed $2^{C\delta j}$ loss.

Combining \eqref{eq:tau-orthogonality} and
\eqref{eq:T-orthogonality}, and then integrating over the output
intervals, gives
\begin{equation}\label{eq:S-integrated-theta-dependent}
\mathcal S_{j,\bbk}
\lesssim
2^{C\delta j}
\sum_I\int_I
\sum_{\tau\in\mathcal T_{j,\bbk}(I)}
\sum_{T\in\mathbb T_{\tau,g}}
\|M_{T,\theta}\mu\|_2^2
\,d\theta.
\end{equation}
For each $I$, choose $\theta_I^*\in I$ so that
\[
\int_I
\sum_{\tau\in\mathcal T_{j,\bbk}(I)}
\sum_{T\in\mathbb T_{\tau,g}}
\|M_{T,\theta}\mu\|_2^2\,d\theta
\leq
2|I|
\sum_{\tau\in\mathcal T_{j,\bbk}(I)}
\sum_{T\in\mathbb T_{\tau,g}}
\|M_{T,\theta_I^*}\mu\|_2^2.
\]
For $\tau\in\mathcal T_{j,\bbk}(I)$, set
\[
    \psi_\tau^*:=\psi_{\tau,\theta_I^*},
    \qquad
    M_T\mu:=M_{T,\theta_I^*}\mu
    =\eta_T(\mu*\check\psi_\tau^*).
\]
Thus all multipliers and packets are fixed on each output interval.  Since
$|I|\lesssim\ell_{\bbk}$, we obtain
\begin{equation}\label{eq:S-after-freezing}
\mathcal S_{j,\bbk}
\lesssim
2^{C\delta j}\ell_{\bbk}
\sum_I
\sum_{\tau\in\mathcal T_{j,\bbk}(I)}
\sum_{T\in\mathbb T_{\tau,g}}
\|M_T\mu\|_2^2.
\end{equation}

\medskip
\noindent\textbf{Step 2: Pigeonholing and two estimates.}
For every packet in \eqref{eq:S-after-freezing}, define
\begin{equation}\label{eq:f-T}
    f_T(y)
    :=
    \int_{\mathbb R^n}
    \eta_T(x)\overline{M_T\mu(x)}
    \check\psi_\tau^*(x-y)\,dx.
\end{equation}
By Fubini and the identity
\[
    M_T\mu=\eta_T(\mu*\check\psi_\tau^*),
\]
we have
\begin{equation}\label{eq:fubini}
    \|M_T\mu\|_2^2=\int f_T\,d\mu.
\end{equation}
Moreover, the Fourier transform in the $y$-variable of
$\check\psi_\tau^*(x-y)$ is supported in $-2\tau$. Since
$\psi_\tau^*$ is uniformly adapted to $\tau$, the standard kernel
bounds give
\[
    \|\check\psi_\tau^*\|_1\lesssim1,
    \qquad
    \|\check\psi_\tau^*\|_\infty
    \lesssim|\tau|.
\]
Together with \eqref{eq:L1-localization}, these imply
\begin{equation}
    \operatorname{supp}\widehat f_T
    \subset 2(-\tau), 
\end{equation}
\begin{equation}\label{0818516}
    \|f_T\|_2
    \lesssim \|M_T\mu\|_2,
\end{equation}
and
\begin{equation}\label{eq:f-T-properties}
\begin{aligned}
    \|f_T\|_\infty
    &\leq
    \|\check\psi_\tau^*\|_\infty
    \|\eta_T M_T\mu\|_1\\
    &\lesssim
    |\tau|\,\|M_T\mu\|_1\\
    &\lesssim
    |\tau|
    \left(
        2^{C\delta j}\mu(100T)
        +\operatorname{RapDec}(2^j)\mu(\mathbb R^n)
    \right)\\
    &\lesssim
    2^{C\delta j}|\tau|\mu(100T)
    +\operatorname{RapDec}(2^j)\mu(\mathbb R^n).
\end{aligned}
\end{equation}
The same kernel bounds show that $f_T$ decreases rapidly away from
$100T$. \medskip

For the rest of the section, we set
\begin{equation}\label{eq:p}
    p=n(n+1).
\end{equation}
We use the two standard dyadic pigeonholings.
First group the packets according to the size of $\|f_T\|_p$.  Next tile
$B^n(0,1)$ by balls of radius $2^{-j}$ and group those balls according to
the number of selected planks meeting them.   We thereby obtain a subcollection
\[
    \mathbb W
    =\bigcup_{\tau\in\mathcal T^{\mathrm{net}}_{j,\bbk}}
      \mathbb W_\tau,
    \qquad
    \mathbb W_\tau\subset\mathbb T_{\tau,g},
\]
a dyadic number $M$, and a union $Y$ of disjoint $2^{-j}$-balls such that
\[
    \|f_T\|_p\sim\|f_{T'}\|_p
    \qquad (T,T'\in\mathbb W),
\]
every ball in $Y$ meets $\sim M$ planks from $\mathbb W$, and
\begin{equation}\label{eq:energy-captured-direct}
\sum_I
\sum_{\tau\in\mathcal T_{j,\bbk}(I)}
\sum_{T\in\mathbb T_{\tau,g}}
\|M_T\mu\|_2^2
\lesssim
2^{C\epsilon j}
\int_Y\left|\sum_{T\in\mathbb W}f_T\right|d\mu.
\end{equation}
Combining \eqref{eq:S-after-freezing} and
\eqref{eq:energy-captured-direct}, we obtain
\begin{equation}\label{eq:S-to-selected-integral}
    \mathcal S_{j,\bbk}
    \lesssim
    2^{C(\epsilon+\delta)j}\ell_{\bbk}
    \int_Y\left|\sum_{T\in\mathbb W}f_T\right|d\mu.
\end{equation}

We will use the following two estimates for the integral in
\eqref{eq:S-to-selected-integral}:
\begin{equation}\label{eq:first-bound}
    \int_Y\left|\sum_{T\in\mathbb W}f_T\right|d\mu
    \lesssim
    2^{C\delta j}M|\tau|\beta_{j,\bbk}\mu(\mathbb R^n),
\end{equation}
and
\begin{equation}\label{eq:second-bound}
\begin{split}
\int_Y\left|\sum_{T\in\mathbb W}f_T\right|d\mu
\lesssim{}&
2^{C\epsilon j}
\left(\frac{2^{K_{\bbk}}}{M}\right)^{1-\frac2p}
2^{\sum_i\left(1-\frac{(n_i-m_i)(n_i+1-m_i)}p\right)k_i}\\
&\times
|\tau|^{1-\frac2p}
\beta_{j,\bbk}^{1-\frac2p}
2^{2j(n-\alpha)/p}\mu(\mathbb R^n).
\end{split}
\end{equation}
We prove these estimates in order.

\smallskip
\noindent\emph{Proof of \eqref{eq:first-bound}.}
Every $T\in\mathbb W$ is good, so
\[
    \mu(100T)<\beta_{j,\bbk}.
\]
The last estimate in \eqref{eq:f-T-properties}, the rapid decay of $f_T$
away from $100T$, and the fact that every ball in $Y$ meets $\sim M$ selected
planks give
\[
    \left|\sum_{T\in\mathbb W}f_T(y)\right|
    \lesssim
    2^{C\delta j}M|\tau|\beta_{j,\bbk}
    +\operatorname{RapDec}(2^j)\mu(\mathbb R^n),
    \qquad y\in Y.
\]
Integrating against $\mu$ proves \eqref{eq:first-bound}.

\smallskip
\noindent\emph{Proof of \eqref{eq:second-bound}.}
Choose a Schwartz function $\rho$ whose Fourier transform equals one on a
sufficiently large fixed ball, and put
$\rho_j(x)=2^{jn}\rho(2^jx)$. By local constancy, after replacing $Y$ by
a fixed enlargement, H\"older's inequality with exponents $p$ and
$p/(p-1)$ gives
\begin{equation}\label{eq:holder}
\begin{split}
\int_Y\left|\sum_{T\in\mathbb W}f_T\right|d\mu
\lesssim{}&
\left\|\sum_{T\in\mathbb W}f_T\right\|_{L^p(Y)}\\
&\times
\left(
    \int_Y
    \bigl(\mu*|\rho_j|\bigr)^{p/(p-1)}
\right)^{1-1/p}.
\end{split}
\end{equation}
The Frostman condition gives
\begin{equation}\label{eq:frostman-convolution}
    \|\mu*\left|\rho_j \right|\|_\infty\lesssim2^{j(n-\alpha)}.
\end{equation}
Furthermore, every $2^{-j}$-ball in $Y$ meets approximately $M$ planks,
so
\[
    \mathbf1_Y\lesssim \frac1M \sum_{T\in\mathbb W}\mathbf1_{100T}.
\]
Since the selected planks are good,
\begin{equation}\label{eq:Y-mass}
    \int_Y\mu*\left|\rho_j\right|
    \lesssim
    \frac1M\sum_{T\in\mathbb W}\mu(100T)
    \lesssim
    \frac{|\mathbb W|}{M}\beta_{j,\bbk}.
\end{equation}

Since $\|\rho_j\|_1\lesssim1$, we also have
\[
    \int_Y\mu*|\rho_j|
    \lesssim
    \mu(\mathbb R^n).
\]
Hence, by interpolation between $L^1(Y)$ and $L^\infty(Y)$,
\begin{align}
&\left(
    \int_Y
    \bigl(\mu*|\rho_j|\bigr)^{p/(p-1)}
\right)^{1-1/p}
\notag\\
&\qquad\lesssim
\|\mu* \left|\rho_j \right|\|_\infty^{1/p}
\left(\int_Y\mu* \left|\rho_j\right|\right)^{1-1/p}
\notag\\
&\qquad\lesssim
2^{j(n-\alpha)/p}
\min\left\{
    \mu(\mathbb R^n),
    \frac{|\mathbb W|}{M}\beta_{j,\bbk}
\right\}^{1-\frac1p}
\notag\\
&\qquad\lesssim
2^{j(n-\alpha)/p}
\mu(\mathbb R^n)^{1/2}
\left(
    \frac{|\mathbb W|}{M}\beta_{j,\bbk}
\right)^{\frac12-\frac1p}.
\label{eq:measure-factor}
\end{align}

Each output interval corresponds to one point of the $\ell_{\bbk}$-net,
and each net point contributes only $O_n(1)$ output boxes.  Consequently,
\begin{equation}\label{eq:number-net-output-boxes}
    \#\mathcal T^{\mathrm{net}}_{j,\bbk}
    \lesssim \ell_{\bbk}^{-1}
    =2^{K_{\bbk}}.
\end{equation}

\begin{proposition}[Decoupling associated with an output]
\label{prop:output-decoupling}
Let $p=n(n+1)$ and $j\geq1$, and let $\bbk$ be an output.  Let
$\mathcal T^{\mathrm{net}}_{j,\bbk}$ be the net-indexed family above.
Suppose that $F_\tau\in L^p(\mathbb R^n)$ and
\[
    \operatorname{supp}\widehat{F_\tau}
    \subset C\tau,
    \qquad
    \tau\in\mathcal T^{\mathrm{net}}_{j,\bbk},
\]
where $C$ is fixed.  Then, for every $\eta>0$,
\begin{equation}\label{eq:output-decoupling}
\left\|
    \sum_{\tau\in\mathcal T^{\mathrm{net}}_{j,\bbk}}F_\tau
\right\|_{L^p(\mathbb R^n)}
\lesssim_\eta
2^{\eta j}
2^{\sum_{i=1}^J
\left(\frac12-\frac{(n_i-m_i)(n_i+1-m_i)}{2p}\right)k_i}
\left(
    \sum_{\tau\in\mathcal T^{\mathrm{net}}_{j,\bbk}}
    \|F_\tau\|_p^2
\right)^{1/2}.
\end{equation}
The same estimate holds for the reflected family
$-\mathcal T^{\mathrm{net}}_{j,\bbk}$.
\end{proposition}

Proposition~\ref{prop:output-decoupling} is obtained by iterating the
$\ell^2$ form of the stagewise decoupling argument underlying
\cite[Claims~3.1 and~4.1]{gan2024restricted}, with the stage scales written as
$2^{-k_i}$; see Appendix~\ref{app:frequency-decomposition}. \medskip

Apply Proposition~\ref{prop:output-decoupling}, with its loss chosen to be
$\epsilon$, to
\[
    F_\tau:=\sum_{T\in\mathbb W_\tau}f_T.
\]
We use the reflected version because
$\operatorname{supp}\widehat f_T\subset2(-\tau)$.  
\begin{align}
\left\|\sum_{T\in\mathbb W}f_T\right\|_p
&\lesssim
2^{C\epsilon j}
2^{\sum_i\left(\frac12-\frac{(n_i-m_i)(n_i+1-m_i)}{2p}\right)k_i}
\left(\sum_{\tau } \Big\| \sum_{T \in \mathbb W_{\tau}} f_T \Big\|_p^2\right)^{1/2}.
\end{align}
By
\eqref{eq:number-net-output-boxes}, the bounded overlap of the packets with
a fixed $\tau$, and the dyadic constancy of $\|f_T\|_p$, we obtain
\begin{align}
\left\|\sum_{T\in\mathbb W}f_T\right\|_p
&\lesssim
2^{C\epsilon j}
2^{\sum_i\left(\frac12-\frac{(n_i-m_i)(n_i+1-m_i)}{2p}\right)k_i}
\left(\frac{2^{K_{\bbk}}}{|\mathbb W|}\right)^{\frac12-\frac1p}
\left(\sum_{T\in\mathbb W}\|f_T\|_p^2\right)^{1/2}.
\label{eq:decoupling-W}
\end{align}
By Bernstein's inequality, \eqref{0818516}, and
\eqref{eq:energy-captured-direct},
\begin{align}
\left(\sum_{T\in\mathbb W}\|f_T\|_p^2\right)^{1/2}
&\lesssim
|\tau|^{\frac12-\frac1p}
\left(\sum_{T\in\mathbb W}\|f_T\|_2^2\right)^{1/2}
\notag\\
&\lesssim
2^{C\epsilon j}|\tau|^{\frac12-\frac1p}
\left(
\int_Y\left|\sum_{T\in\mathbb W}f_T\right|d\mu
\right)^{1/2}.
\label{eq:bernstein-and-capture}
\end{align}
Substituting \eqref{eq:measure-factor}, \eqref{eq:decoupling-W}, and
\eqref{eq:bernstein-and-capture} into \eqref{eq:holder}, and then
squaring, proves \eqref{eq:second-bound}.

\medskip
\noindent\textbf{Step 3: Completion of the proof.}
Raise \eqref{eq:first-bound} to the power $1-2/p$ and multiply it by
\eqref{eq:second-bound}.  The powers of $M$ cancel.  Combining the result
with \eqref{eq:S-to-selected-integral}, we obtain
\begin{equation}\label{eq:S-master}
\begin{split}
\mathcal S_{j,\bbk}^{2-\frac2p}
\lesssim{}&
2^{C\epsilon j}
\ell_{\bbk}^{2-\frac2p}
2^{(1-\frac2p)K_{\bbk}}
2^{\sum_i\left(1-\frac{(n_i-m_i)(n_i+1-m_i)}p\right)k_i}
2^{2j(n-\alpha)/p}\\
&\times
|\tau|^{2-\frac4p}
\beta_{j,\bbk}^{2-\frac4p}
\mu(\mathbb R^n)^{2-\frac2p}.
\end{split}
\end{equation}
Moreover,
\[
2^{\sum_i\left(1-\frac{(n_i-m_i)(n_i+1-m_i)}p\right)k_i}
=
2^{K_{\bbk}-\frac1p
\sum_i(n_i-m_i)(n_i+1-m_i)k_i}.
\]
Together with $\ell_{\bbk}=2^{-K_{\bbk}}$, this shows explicitly that the
powers of $K_{\bbk}$ cancel:
\[
    -\left(2-\frac2p\right)K_{\bbk}
    +\left(1-\frac2p\right)K_{\bbk}
    +K_{\bbk}=0.
\]
Using \eqref{eq:tau-volume} and \eqref{eq:beta}, we have
\[
    |\tau|\beta_{j,\bbk}
    \sim 2^{(m-\alpha_0)j+C_0\epsilon j}.
\]
Thus the powers of $A_{\bbk}$ also cancel, and hence
\begin{equation}\label{eq:S-exponent-one}
\mathcal S_{j,\bbk}^{2-\frac2p}
\lesssim
2^{C\epsilon j}
2^{\frac{2(n-\alpha)j-
\sum_i(n_i-m_i)(n_i+1-m_i)k_i}{p}}
2^{\left(2-\frac4p\right)(m-\alpha_0)j}
\mu(\mathbb R^n)^{2-\frac2p}.
\end{equation}

It remains to rewrite the sum in the middle exponent.  Summing
\eqref{eq:output-identities} over $q=m+1,\ldots,n$ and reversing the order
of summation, we obtain
\begin{align*}
    (n-m)j
    &=
    \sum_{i=1}^J
    \left(\sum_{q=m+1}^{n_i}(q-m_i)\right)k_i + O_n(1)\\
    &=
    \frac12
    \sum_{i=1}^J(n_i-m_i)(n_i+1-m_i)k_i
    -A_{\bbk} + O_n(1).
\end{align*}
Therefore
\begin{equation}\label{eq:output-scale-identity}
    \sum_{i=1}^J(n_i-m_i)(n_i+1-m_i)k_i
    =2(n-m)j+2A_{\bbk} + O_n(1).
\end{equation}
Substituting \eqref{eq:output-scale-identity} into
\eqref{eq:S-exponent-one} gives
\begin{equation}\label{eq:S-exponent-two}
\mathcal S_{j,\bbk}^{2-\frac2p}
\lesssim
2^{-2A_{\bbk}/p}
2^{-\frac2p\left[(\alpha-m)-(p-2)(m-\alpha_0)\right]j}
2^{C\epsilon j}
\mu(\mathbb R^n)^{2-\frac2p}.
\end{equation}
Substituting $m-\alpha_0=\epsilon$, and then choosing
$\epsilon\ll\alpha-m$ and $\delta\ll\epsilon$ sufficiently small, we
obtain some $c>0$ such that
\begin{equation}\label{eq:S-decay}
    \mathcal S_{j,\bbk}
    \lesssim
    \mu(\mathbb R^n)2^{-cj}.
\end{equation}
Returning to \eqref{eq:full-good-reduction}, and using again that the
number of outputs at scale $j$ is polynomial in $j$, we obtain
\[
\begin{aligned}
\int_0^1\|g_{\theta,g}\|_{L^2(V_\theta)}^2\,d\theta
&\lesssim
\sum_{j\geq1}j^{C}\sum_{\bbk}\mathcal S_{j,\bbk}\\
&\lesssim
\mu(\mathbb R^n)
\sum_{j\geq1}j^{C}2^{-cj}\\
&\lesssim
\mu(\mathbb R^n).
\end{aligned}
\]
This proves Proposition~\ref{prop:good}.
\end{proof}

\appendix
\renewcommand{\theequation}{\Alph{section}.\arabic{equation}}

\section{Multiscale frequency decomposition and iterated decoupling}
\label{app:frequency-decomposition}

This appendix explains how the decomposition in
\cite[Sections~3--5]{gan2024restricted} gives both the  frequency
decomposition in Proposition~\ref{prop:Vtheta-decomposition} and the
 decoupling estimate in
Proposition~\ref{prop:output-decoupling}.  We first describe the region to be
decomposed at each fixed parameter value, then carry out the first step and
the inductive step, including the corresponding decoupling estimates and
rescalings, and finally identify their outputs with the objects used in
Section~2.

At a fixed $t$, we use the derivative coordinates
\[
    \gamma^{(1)}(t),\ldots,\gamma^{(n)}(t).
\]
By Lemma~\ref{lem:frame-comparison}, up to $O_n(1)$ subdivisions into sign
components and fixed enlargements, it is enough to carry out the
decomposition in these coordinates.
Fix a dyadic frequency scale $2^j$ and put $\rho:=2^{-j}$.  After rescaling
frequency space by a factor of $2^{-j}$, the region to be decomposed has the form
\begin{equation}\label{eq:app-initial-region-new}
    \left\{
        \sum_{r=1}^n\lambda_r\gamma^{(r)}(t):
        \begin{array}{l}
        |\lambda_r|\lesssim1\quad(1\leq r\leq m),\\
        |\lambda_q|\lesssim\rho\quad(m<q\leq n),\\
        \displaystyle\sum_{r=1}^m|\lambda_r|\sim1
        \end{array}
    \right\}.
\end{equation}
In particular, at every fixed $t$, the coefficient range in each order
$q>m$ in
\eqref{eq:app-initial-region-new} has size $\rho$.  All the decompositions
below are carried out separately for each $t$ by smooth partitions of unity.  We
may assume that $j$ is sufficiently large.  

\subsection{The first stage}

Set $n_1:=n$.  Following \cite[(3.1)--(3.3)]{gan2024restricted}, divide
\eqref{eq:app-initial-region-new} according to the largest index $m_1\leq m$
whose coefficient has size comparable to one.  On the piece indexed by
$m_1$,
\begin{equation}\label{eq:app-first-m-new}
    |\lambda_{m_1}|\sim1,
    \qquad
    |\lambda_u|\ll1
    \quad(m_1<u\leq m).
\end{equation}

Let $s_{1,\mathrm{end}}$ be the smallest dyadic number not smaller than
$\rho^{1/(n-m_1)}$; thus
\[
    \rho^{1/(n-m_1)}
    \leq s_{1,\mathrm{end}}
    <2\rho^{1/(n-m_1)}.
\]
For dyadic $s_{1,\mathrm{end}}\leq s_1\ll1$, divide the $m_1$-piece
according to
\begin{equation}\label{eq:app-first-s-new}
    \frac{|\lambda_{m_1+1}|}{s_1}
    +\frac{|\lambda_{m_1+2}|}{s_1^2}
    +\cdots+
    \frac{|\lambda_m|}{s_1^{m-m_1}}
    \sim1.
\end{equation}
At $s_1=s_{1,\mathrm{end}}$, replace $\sim1$ by $\lesssim1$;
this is the endpoint piece.  If $m_1=m$, the sum in
\eqref{eq:app-first-s-new} is empty.  In this case the $m_1$-piece is not
further decomposed in frequency according to $s_1$, and we set
$s_1=s_{1,\mathrm{end}}$. 
This is the partition in \cite[(3.4)--(3.8)]{gan2024restricted}.

For later comparison, at every fixed $t$ the piece selected by
\eqref{eq:app-first-m-new} and \eqref{eq:app-first-s-new} is contained in
\begin{equation}\label{eq:app-first-region-new}
\left\{
    \sum_{r=1}^n \lambda_r\gamma^{(r)}(t):
    \begin{array}{ll}
    |\lambda_r|\lesssim1,
        &1\leq r<m_1,\\
    |\lambda_{m_1}|\sim1,
        &\\
    |\lambda_r|\lesssim s_1^{r-m_1},
        &m_1<r\leq m,\\
    |\lambda_q|\lesssim\rho,
        &m<q\leq n
    \end{array}
\right\}.
\end{equation}

Fix $m_1$ and $s_1$.  Divide $[0,1]$ into intervals $I$ of length
comparable to $s_1$,
and choose a point $t_I\in I$.  After splitting according to the sign of $\lambda_{m_1}$, fix
$\sigma\in\{-1,1\}$ accordingly and consider the box
\begin{equation}\label{eq:app-first-box-new}
\left\{
    \sum_{r=1}^n \mu_r\gamma^{(r)}(t_I):
    \begin{array}{ll}
    |\mu_r|\lesssim1,
        &1\leq r<m_1,\\
    \sigma\mu_{m_1}\sim1,
        &\\
    |\mu_r|\lesssim s_1^{r-m_1},
        &m_1<r\leq n
    \end{array}
\right\}.
\end{equation}
Since
\[
    \rho\leq s_1^{n-m_1}\leq s_1^{q-m_1},
    \qquad m<q\leq n,
\]
the same Taylor-expansion argument used in Subsection~2.2 shows that, for
fixed $m_1$, $s_1$, and $\sigma$, the union over $t\in I$ of the
corresponding regions is contained in a fixed enlargement of
the box in \eqref{eq:app-first-box-new}.  

Put $p:=n(n+1)$.  Fix $m_1$, $s_1$, and
$\sigma\in\{-1,1\}$.  For each interval $I$ above, let
$F_I\in L^p(\mathbb R^n)$ have Fourier support in a fixed enlargement of
the box in \eqref{eq:app-first-box-new} based at $t_I$ and having sign
$\sigma$. The Fourier supports above form, up to a fixed linear change of
coordinates, a cylinder with $m_1$ flat directions and an
$(n_1-m_1)$-dimensional non-degenerate curve factor. Applying the
$\ell^2$ decoupling inequality to the curved factor and handling the
flat directions by Fubini gives, for every $\eta>0$,
\begin{equation}\label{eq:app-one-stage-new}
\left\|
    \sum_I F_I
\right\|_p
\lesssim_\eta
\rho^{-\eta}
s_1^{-\left(
    \frac12-
    \frac{(n_1-m_1)(n_1+1-m_1)}{2p}
\right)}
\left(
    \sum_I\|F_I\|_p^2
\right)^{1/2}.
\end{equation}
Although \cite[Claim~3.1]{gan2024restricted} is stated with an $\ell^p$ sum,
\eqref{eq:app-one-stage-new} follows by retaining the square sum supplied
by the standard $\ell^2$ curve decoupling \cite{MR3548534}, instead of applying
the final $\ell^2$-to-$\ell^p$ H\"older inequality.  

After applying \eqref{eq:app-one-stage-new}, we continue the argument
separately on each interval $I$ and rescale the regions selected by
\eqref{eq:app-first-m-new} and \eqref{eq:app-first-s-new} as $t$
ranges over $I$.  Writing
$I=[t_0,t_0+|I|]$, rescale $I$ affinely to $[0,1]$ by setting
\[
    u:=\frac{t-t_0}{|I|}.
\]
The accompanying change of frequency variables also replaces $\gamma$
by a rescaled curve $\widetilde\gamma$.  Under the frequency change of variables in
\cite[(3.25)]{gan2024restricted} and the coefficient change in
\cite[(3.30)]{gan2024restricted}, let $\lambda_r^{\mathrm{new}}$ denote
the coefficient variable passed to the next stage.  Up to fixed
constants, $\lambda_r^{\mathrm{new}}$ is $\lambda_r$ for $r\leq m_1$
and $s_1^{-(r-m_1)}\lambda_r$ for $r>m_1$.  We then relabel
$\lambda_r^{\mathrm{new}}$ as $\lambda_r$.  In these relabeled
variables,
the terms of orders $r\geq m_1$ take the form
\[
    \lambda_{m_1}\widetilde\gamma^{(m_1)}(u)
    +\cdots+
    \lambda_n\widetilde\gamma^{(n)}(u),
\]
as in \cite[(3.40)]{gan2024restricted}.  The directions of orders $r<m_1$ are flat and are always handled by
Fubini.  We therefore suppress their precise form after the change of
variables and, for notational convenience, continue to denote them by
$\widetilde\gamma^{(r)}(u)$.  For $r<m_1$, this notation refers to the
transformed flat direction rather than the actual $r$-th derivative of
the rescaled curve.  Only the terms with $r\geq m_1$ are written
literally in the derivative coordinates of $\widetilde\gamma$.  We use
the same convention at every later stage.  With this convention, we rename $\widetilde\gamma$ as $\gamma$.  Set
\[
    \ell_1:=s_1,
    \qquad
    b_q^{[1]}:=\rho s_1^{-(q-m_1)},
    \qquad m<q\leq n_1.
\]
In these renamed variables, the region corresponding to $u$ is
contained in
\[
\left\{
    \sum_{r=1}^{n_1}\lambda_r\gamma^{(r)}(u):
    \begin{array}{ll}
    |\lambda_r|\lesssim1,
        &1\leq r<m_1,\\
    |\lambda_{m_1}|\sim1,
        &\\
    |\lambda_r|\lesssim1,
        &m_1<r\leq m,\\
    |\lambda_q|\lesssim b_q^{[1]},
        &m<q\leq n_1
    \end{array}
\right\}.
\]
Moreover, \eqref{eq:app-first-s-new} becomes
\[
    \sum_{r=m_1+1}^m|\lambda_r|\sim1
\]
when $s_1$ is not the endpoint scale, and the left-hand side is
$\lesssim1$ at the endpoint scale.  Thus, at a non-endpoint scale, the largest index $r$ for which
$|\lambda_r|\sim1$ is greater than $m_1$; at the endpoint scale, it
may equal $m_1$.
Also, at the endpoint scale,
\[
    b_{n_1}^{[1]}\sim1.
\]
These are the data used at the next stage.

\subsection{The inductive step}

Suppose that stages $1,\ldots,i-1$ have been completed, where $i\geq2$,
and fix one of the intervals obtained at stage $i-1$.  In the original parameter variable, this interval has length comparable to
$\ell_{i-1}$, while after it is rescaled to $[0,1]$, the coefficient
range in order $q$ is $b_q^{[i-1]}$, where
\begin{equation}\label{eq:app-data-after-i-new}
    \ell_{i-1}
    :=
    \prod_{\nu<i}s_\nu,
    \qquad
    b_q^{[i-1]}
    :=
    \rho\prod_{\nu<i}s_\nu^{-(q-m_\nu)},
    \quad m<q\leq n_{i-1}.
\end{equation}
We again denote the rescaled curve by $\gamma$ and the coefficient
variables obtained from the preceding rescalings by $\lambda_r$.

First set
\begin{equation}\label{eq:app-next-n-new}
    n_i
    :=
    \begin{cases}
        n_{i-1},
        &\text{if $s_{i-1}$ was not the endpoint scale},\\
        n_{i-1}-1,
        &\text{if $s_{i-1}$ was the endpoint scale}.
    \end{cases}
\end{equation}
This is the update in \cite[(4.10)]{gan2024restricted}. If $n_i=m$, no
stage $i$ is performed, and stage $i-1$ is the last completed stage.
Henceforth assume that $n_i>m$.

In the second case of \eqref{eq:app-next-n-new}, the preceding rescaling
brings the coefficient range in order $n_{i-1}$ to unit size. Following
the alternative change of variables in
\cite[(4.19)]{gan2024restricted}, the ambient frequency coordinates
corresponding to orders $q>n_i$ are left unchanged in all subsequent
rescalings. Their ranges are bounded, so they are omitted from the
coefficient displays below and handled by Fubini.

After the first $i-1$ rescalings, decompose according to the largest
index $m_i\leq m$ whose coefficient has size comparable to one. On the
piece indexed by $m_i$,
\[
    |\lambda_{m_i}|\sim1,
    \qquad
    |\lambda_u|\ll1
    \quad(m_i<u\leq m).
\]
If $s_{i-1}\neq s_{i-1,\mathrm{end}}$, then
\cite[(4.11)--(4.12)]{gan2024restricted} gives
$m_i>m_{i-1}$.  If $s_{i-1}=s_{i-1,\mathrm{end}}$, then
\cite[(4.23)--(4.24)]{gan2024restricted} allows
$m_i=m_{i-1}$, so $m_i\geq m_{i-1}$.

Let $s_{i,\mathrm{end}}$ be the smallest dyadic number not smaller than
$\bigl(b_{n_i}^{[i-1]}\bigr)^{1/(n_i-m_i)}$; thus
\begin{equation}\label{eq:app-next-endpoint-new}
    \bigl(b_{n_i}^{[i-1]}\bigr)^{1/(n_i-m_i)}
    \leq s_{i,\mathrm{end}}
    <2\bigl(b_{n_i}^{[i-1]}\bigr)^{1/(n_i-m_i)}.
\end{equation}
For dyadic $s_{i,\mathrm{end}}\leq s_i\ll1$, divide the $m_i$-piece
according to
\begin{equation}\label{eq:app-next-s-new}
    \frac{|\lambda_{m_i+1}|}{s_i}
    +\cdots+
    \frac{|\lambda_m|}{s_i^{m-m_i}}
    \sim1,
\end{equation}
with $\lesssim1$ on the endpoint piece.  If $m_i=m$, the sum in
\eqref{eq:app-next-s-new} is empty.  In this case the $m_i$-piece is not
further decomposed in frequency according to $s_i$, and we set
$s_i=s_{i,\mathrm{end}}$.  

The choices of $m_i$ and $s_i$ are again made at each fixed parameter
value after the first $i-1$ rescalings.  The lower bounds selected at
the preceding stages are retained.  Together with the choice of
$m_i$, this gives
\[
    |\lambda_{m_\nu}|\sim1,
    \qquad 1\leq\nu\leq i.
\]
For each fixed parameter value $t$, the region selected by the
decompositions up to stage $i$ is contained in
\begin{equation}\label{eq:app-next-region-new}
\left\{
    \sum_{r=1}^{n_i}\lambda_r\gamma^{(r)}(t):
    \begin{array}{ll}
            |\lambda_r|\lesssim1,
        &r\in\{1,\ldots,m_i\}\setminus\{m_1,\ldots,m_i\},\\
    |\lambda_r|\sim1,
        &r\in\{m_1,\ldots,m_i\},\\
    |\lambda_r|\lesssim s_i^{r-m_i},
        &m_i<r\leq m,\\
    |\lambda_q|\lesssim b_q^{[i-1]},
        &m<q\leq n_i
    \end{array}
\right\}.
\end{equation}
For $m<q\leq n_i$, the definitions of $b_q^{[i-1]}$ and
$\ell_{i-1}$, together with the choice of $s_i$, give
\begin{equation}\label{eq:app-next-width-comparison-new}
    b_q^{[i-1]}
    =b_{n_i}^{[i-1]}\ell_{i-1}^{n_i-q}
    \leq b_{n_i}^{[i-1]}
    \leq s_i^{n_i-m_i}
    \leq s_i^{q-m_i}.
\end{equation}

Fix $(m_i,n_i,s_i)$.  Working in the parameter variable after the first
$i-1$ rescalings, divide $[0,1]$ into intervals $I$ of length comparable
to $s_i$ and choose a point $t_I\in I$.  After fixing the sign
of the $m_i$-th coefficient, fix $\sigma\in\{-1,1\}$ accordingly and
consider
\begin{equation}\label{eq:app-next-box-new}
\left\{
    \sum_{r=1}^{n_i}\mu_r\gamma^{(r)}(t_I):
    \begin{array}{ll}
    |\mu_r|\lesssim1,
        &1\leq r<m_i,\\
    \sigma\mu_{m_i}\sim1,
        &\\
    |\mu_r|\lesssim s_i^{r-m_i},
        &m_i<r\leq n_i
    \end{array}
\right\}.
\end{equation}
As in the first stage, Taylor expansion at $t_I$ shows that, for fixed
$m_i$, $n_i$, $s_i$, and $\sigma$, the union of the regions selected above as $t$ ranges over $I$ is contained in a fixed enlargement of
the box in \eqref{eq:app-next-box-new} based at $t_I$ and having sign
$\sigma$.  The comparison
\eqref{eq:app-next-width-comparison-new} shows that the ranges
$b_q^{[i-1]}$ satisfy the bounds in
\eqref{eq:app-next-box-new}. 

Within each fixed interval from the preceding stage, let $F_I$ have
Fourier support in a fixed enlargement of the box in
\eqref{eq:app-next-box-new} based at $t_I$ and having the fixed sign
$\sigma$.  The
$\ell^2$ form of the decoupling used in
\cite[Claim~4.1]{gan2024restricted}, applied in the derivative coordinates
of orders $m_i+1,\ldots,n_i$, gives
\begin{equation}\label{eq:app-inductive-decoupling-new}
\left\|
    \sum_I F_I
\right\|_p
\lesssim_\eta
\rho^{-\eta}
s_i^{-\left(
    \frac12-
    \frac{(n_i-m_i)(n_i+1-m_i)}{2p}
\right)}
\left(
    \sum_I\|F_I\|_p^2
\right)^{1/2}.
\end{equation}
After applying \eqref{eq:app-inductive-decoupling-new}, we continue the
argument separately on each interval $I$ and rescale the regions selected
by the decompositions up to stage $i$ as $t$ ranges over $I$. These
regions satisfy the bounds in \eqref{eq:app-next-region-new} together
with the condition \eqref{eq:app-next-s-new}.  Rescale each newly obtained interval $I$ affinely to $[0,1]$ and make
the accompanying change of frequency variables. Let $\widetilde\gamma$ denote the rescaled curve and let
$\lambda_r^{\mathrm{new}}$ denote the coefficient variable after this
rescaling. Up to fixed constants, $\lambda_r^{\mathrm{new}}$ is
$\lambda_r$ for $r\leq m_i$ and
$s_i^{-(r-m_i)}\lambda_r$ for $m_i<r\leq n_i$. In these new variables,
the terms of orders $m_i\leq r\leq n_i$ take the form
\[
    \sum_{r=m_i}^{n_i}
    \lambda_r^{\mathrm{new}}\widetilde\gamma^{(r)}(u),
\]
as in the first stage.  The directions of
orders $r<m_i$ are flat and are understood according to the convention
above, while the ambient frequency coordinates of orders $q>n_i$, which were
omitted from the displays above, remain unchanged.  Since $m_\nu\leq m_i$ for every $\nu\leq i$, the bounds
$|\lambda_{m_\nu}^{\mathrm{new}}|\sim1$ are preserved.  We then relabel $\widetilde\gamma$ as $\gamma$ and
$\lambda_r^{\mathrm{new}}$ as $\lambda_r$. Under this relabeling,
\eqref{eq:app-next-s-new} becomes
\[
    \sum_{r=m_i+1}^m|\lambda_r|\sim1,
\]
with $\lesssim1$ at the endpoint scale.  The preimage of $I$ in the
original parameter variable has length comparable to
$\ell_i=\ell_{i-1}s_i$, and the coefficient ranges become
\begin{equation}\label{eq:app-next-width-new}
    \ell_i=\ell_{i-1}s_i,
    \qquad
    b_q^{[i]}
    =
    b_q^{[i-1]}s_i^{-(q-m_i)},
    \qquad m<q\leq n_i.
\end{equation}
Thus the transformed regions have the form required to begin the next
stage.

\subsection{Outputs and iteration of decoupling}

At each transition before the procedure stops, the $m$-index increases or
the $n$-index decreases.  Hence the procedure stops after $J=O_n(1)$
stages, and the last completed stage has $n_J=m+1$.  The recorded data are
exactly an output in Proposition~\ref{prop:Vtheta-decomposition}:
\begin{equation}\label{eq:app-output-new}
    \bigl(m_i,n_i,s_i\bigr)_{i=1}^J,
    \qquad
    s_i=2^{-k_i},\quad k_i\in\mathbb N,
    \qquad
    \ell_{\bbk}=\prod_{i=1}^Js_i.
\end{equation}

Fix $q>m$.  No dyadic subdivision is made in the coefficient of order $q$; its
range changes only under the rescalings.  After
the last stage for which $n_i\geq q$, this range is
\[
    \rho\prod_{i:n_i\geq q}s_i^{-(q-m_i)}
\]
and is comparable to one.  Since each endpoint scale is the smallest dyadic
number above the exact threshold,
\begin{equation}\label{eq:app-product-new}
    \rho
    \leq
    \prod_{i:n_i\geq q}s_i^{q-m_i}
    <2^n\rho,
    \qquad m<q\leq n.
\end{equation}
This is the dyadic form of
\cite[(5.4), (5.7)]{gan2024restricted}.  Taking dyadic logarithms gives
\[
    0
    \leq
    j-\sum_{i:n_i\geq q}(q-m_i)k_i
    <n,
\]
which is \eqref{eq:output-identities}.  The case $q=m+1$ also gives
\[
    K_{\bbk}:=\sum_{i=1}^Jk_i\leq j,
    \qquad
    \ell_{\bbk}=2^{-K_{\bbk}}\geq2^{-j}.
\]
Together with $J=O_n(1)$, this shows that the number of outputs is
$O(j^{C_n})$.

We now recover the frequency decomposition. Undoing the rescaling at
stage $i$ contributes a factor $s_i^{r-m_i}$ to the coefficient range
of order $r$ when $m_i<r$, and a factor $1$ otherwise.
\begin{equation}\label{eq:app-tangent-width-new}
    L_r
    :=
    \prod_{i:m_i<r}s_i^{r-m_i},
    \qquad 1\leq r\leq m.
\end{equation}
These are the coefficient ranges produced by the decomposition in
orders $1,\ldots,m$. Since the coefficients of orders $q>m$ are not subdivided, undoing the
rescalings returns their ranges to width $\rho$.  After undoing the rescalings, the lower bounds recorded when the indices $m_i$ were selected, together
with the corresponding upper bounds, give
\[
    |\lambda_r|\sim L_r
    \quad(r\in\mathcal M_{\bbk}),
    \qquad
    |\lambda_r|\lesssim L_r
    \quad(1\leq r\leq m),
\]
where $\mathcal M_{\bbk}=\{m_1,\ldots,m_J\}$.

The choices of the scales $s_i$ ensure that
\eqref{eq:active-gap} holds for every
$r\in\mathcal M_{\bbk}$.  Lemma~\ref{lem:frame-comparison} therefore
allows us to replace these derivative-frame regions, up to fixed
enlargements and $O_n(1)$ subdivisions into sign components, by
comparable Frenet-frame output boxes.  Returning to the original frequency scale gives
\[
    2^jL_r
    =
    2^{\,j-\sum_{i:m_i<r}(r-m_i)k_i}
    =2^{a_r}
    \quad(1\leq r\leq m),
    \qquad
    2^j\rho=1.
\]
Since the construction was carried out separately for each parameter
value, this gives the smooth frequency decomposition in
Proposition~\ref{prop:Vtheta-decomposition}.

Fix an output $\bbk$ and functions $F_\tau$ satisfying
\[
    \operatorname{supp}\widehat{F_\tau}\subset C\tau,
    \qquad
    \tau\in\mathcal T^{\mathrm{net}}_{j,\bbk}.
\]
The only point to note is that, because of the fixed constant $c_0$ used
in Subsection~2.2, the parameter intervals used there need not agree
exactly with those arising in the iteration above.  The intervals obtained after the last stage of the iteration and the
output intervals of Subsection~2.2 both have length comparable to
$\ell_{\bbk}$. Consequently, each interval from either family meets only
$O_\gamma(1)$ intervals from the other family. By
Lemma~\ref{lem:nearby-output-boxes}, the boxes associated with
intersecting intervals are contained in constant enlargements of one
another.
Thus, at an $O_{\gamma,n}(1)$ cost, we may apply
\eqref{eq:app-one-stage-new} at the first stage and
\eqref{eq:app-inductive-decoupling-new} at stages $2,\ldots,J$ directly
to the family $\mathcal T^{\mathrm{net}}_{j,\bbk}$.

The Jacobian factors from the changes of variables occur on both sides
and cancel.  Since $J=O_n(1)$, the losses at the individual stages may
be chosen so that their product is at most $\rho^{-\eta}$.
Consequently,
\begin{equation}\label{eq:app-iterated-new}
\left\|
    \sum_{\tau\in\mathcal T^{\mathrm{net}}_{j,\bbk}}F_\tau
\right\|_p
\lesssim_\eta
\rho^{-\eta}
\prod_{i=1}^J
s_i^{-\left(
    \frac12-
    \frac{(n_i-m_i)(n_i+1-m_i)}{2p}
\right)}
\left(
    \sum_{\tau\in\mathcal T^{\mathrm{net}}_{j,\bbk}}
    \|F_\tau\|_p^2
\right)^{1/2}.
\end{equation}
Substituting $\rho=2^{-j}$ and $s_i=2^{-k_i}$ gives
\eqref{eq:output-decoupling}.

\bibliographystyle{alpha}
\bibliography{reference}

\end{document}